%
%
%
%
%
%
%
%
%
%
\documentclass{amsart}
\usepackage{amsmath}
\usepackage{amsthm}
\usepackage{amssymb}
\usepackage{graphicx}
\usepackage{amsfonts}
\usepackage{latexsym}
\usepackage{setspace}
\usepackage{mathdots}
\usepackage{mathrsfs}
\usepackage{comment}
\usepackage{enumerate}
\usepackage{epsfig}
\usepackage{color}
\usepackage[all]{xy}

\usepackage{bm}
\usepackage{tkz-linknodes}

\usepackage[active]{srcltx}

 \numberwithin{equation}{section}
 \theoremstyle{plain}
\newtheorem{Theorem}[equation]{Theorem}
\newtheorem{Lemma}[equation]{Lemma}
\newtheorem{Proposition}[equation]{Proposition}
\newtheorem{Corollary}[equation]{Corollary}
\theoremstyle{remark}

\theoremstyle{definition}

\newtheorem{Example}[equation]{Example}


\allowdisplaybreaks

\def\phi{\varphi}

\renewcommand{\leq}{\leqslant}
\renewcommand{\geq}{\geqslant}

\makeindex
%
%
%
%
\begin{document}

\title[Weak Parallelogram Constants]{Optimal Weak Parallelogram Constants for $L^p$ Spaces}

\author[Cheng]{Raymond Cheng}
\address{Department of Mathematics and Statistics,
  Old Dominion University,
  Norfolk, VA 23529}
  \email{rcheng@odu.edu}

\author[Mashreghi]{Javad Mashreghi}
\address{D\'epartement de math\'ematiques et de statistique, Universit\'e Laval, Qu\'ebec, QC, Canada, G1V 0A6}
\email{javad.mashreghi@mat.ulaval.ca}

\author[Ross]{William T. Ross}
	\address{Department of Mathematics and Computer Science, University of Richmond, Richmond, VA 23173, USA}
	\email{wross@richmond.edu}

\subjclass[2010]{46B25, 46B20, 46C15, 46G12}

\keywords{Parallelogram law, Clarkson Inequalities, best constants, Hilbert spaces}

\begin{abstract}
Inspired by Clarkson's inequalities for $L^p$ and continuing work from \cite{CR}, this paper computes the optimal constant $C$ in the weak parallelogram laws
$$ \|f + g \|^r +  C\|f - g\|^r \leq 2^{r-1}\big( \|f\|^r + \|g\|^r  \big), $$
$$  \|f + g \|^r +  C\|f- g \|^r \geq 2^{r-1}\big( \|f\|^r + \|g \|^r  \big)$$
for the $L^p$ spaces, $1 < p < \infty$. 
\end{abstract}

\thanks{The second author was supported by NSERC}
	
\maketitle


\section{Introduction}


Let $C>0$ and $1<r<\infty$.  Following \cite{CR}, we say that
 a Banach space $\mathscr{X}$ with norm denoted by $\|\cdot\|$ satisfies the {\it $r$-lower weak parallelogram law with constant $C$} (in brief, $\mathscr{X}$ is $r$-LWP($C$)), if
\begin{equation}\label{LWPeq}
  \|\mathbf{x} + \mathbf{y} \|^r +  C\|\mathbf{x} - \mathbf{y} \|^r \leq 2^{r-1}\big( \|\mathbf{x}\|^r + \|\mathbf{y} \|^r  \big), \quad \mathbf{x}, \mathbf{y} \in \mathscr{X}.
\end{equation}
Similarly, $\mathscr{X}$ satisfies the {\it $r$-upper weak parallelogram law with constant $C$} (in brief, $\mathscr{X}$ is $r$-UWP($C$)), if
\begin{equation}\label{UWPeq}
  \|\mathbf{x} + \mathbf{y} \|^r +  C\|\mathbf{x} - \mathbf{y} \|^r \geq 2^{r-1}\big( \|\mathbf{x}\|^r + \|\mathbf{y} \|^r  \big), \quad \mathbf{x}, \mathbf{y} \in \mathscr{X}
\end{equation}
Let us refer to $C$ as the {\it weak parallelogram constant}. Substituting $\mathbf{x} = - \mathbf{y}$ in \eqref{LWPeq} and \eqref{UWPeq}, observe that $0 < C \leq 1$ in \eqref{LWPeq} and $C \geq 1$ in \eqref{UWPeq}.

There are many applications arising from the weak parallelogram laws.  They were employed in \cite{CRM} to obtain bounds for the zeros of an analytic function, extending the classical bounds due to Cauchy, Lagrange and others.  In \cite{CR2} these tools from Banach space geometry made possible a novel sort of Beurling-Riesz factorization for functions in the space $\ell^p_A$.   Furthermore, this circle of ideas was useful in the prediction theory of stochastic processes endowed with an $L^p$ structure.  In \cite{CR} an $L^p$ Baxter-type inequality and criteria for regularity were obtained; and in \cite{CR2}, stationary, non-anticipating solutions were found for certain ARMA processes with heavy tails. We will unpack more of the history and further connections in Section \ref{back}.  

By extending Clarkson's inequalities, it was shown in \cite{CR} that when $1<p<\infty$, the Lebesgue space $L^p = L^{p}(X, \Omega, \sigma)$ satisfies the upper and lower weak parallelogram laws for a range values of the parameters $C$ and $r$.  The main result of this paper identifies the {\em optimal} values of the weak parallelogram constants in these inequalities.  That is, for  given $r$ and $p$, we supply the largest value of $C$ for which (\ref{LWPeq}) holds, and the smallest for which (\ref{UWPeq}) holds.
To this end, define 
\begin{equation}
       C_{p,r} := \inf_{0 \leq t < 1} \frac{2^{r-r/p}(1+t^p)^{r/p}-(1+t)^r}{(1-t)^r}.
\end{equation}
We will study $C_{p, r}$ more carefully in Section \ref{Crp}. 
With that, here are the weak parallelogram laws satisfied by $L^p$, in their optimal form.
\begin{Theorem}[Optimal Constants Theorem]\label{MAINT}
If $1 < p \leq 2$ and $q$ is the conjugate index ($1/p + 1/q = 1$),  then $L^p$ is:
  \begin{align}
& \mbox{ $r$-UWP(1)  when   $1<r \leq p$;}\label{wkparaeq1}\\
& \mbox{ $r$-LWP($C_{p,r}$)  when $2\leq r \leq q$; and}\label{wkparaeq2}\\
& \mbox{ $r$-LWP(1)  when  $q\leq r <\infty$.}\label{wkparaeq3}
\end{align}
If $2 \leq p < \infty$, then $L^p$ is:
\begin{align}
& \mbox{ $r$-LWP(1)  when  $p \leq r < \infty$; }\label{wkparaeq4}\\
& \mbox{ $r$-UWP($C_{q,r'}^{-p/q}$)  when  $q \leq r \leq 2$; and}\label{wkparaeq5}\\
& \mbox{ $r$-UWP(1)  when  $1<r \leq q$,}\label{wkparaeq6}
\end{align}
where $1/r + 1/r' = 1$.  The weak parallelogram constants are optimal.
\end{Theorem}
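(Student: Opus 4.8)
The plan is to peel off the four assertions whose constant is $C=1$, reduce the remaining two to a single inequality by duality, and then attack that inequality directly. In \eqref{wkparaeq1}, \eqref{wkparaeq3}, \eqref{wkparaeq4} and \eqref{wkparaeq6} the displayed constant $C=1$ is already the extreme admissible value, by the remark after \eqref{UWPeq} ($0<C\le1$ in a lower law, $C\ge1$ in an upper law); so for these it is enough to prove the law itself and optimality is free. Each such law follows from a Clarkson inequality at an endpoint exponent — $L^p$ is $p$-LWP($1$) when $p\ge2$ and $p$-UWP($1$) when $p\le2$, and, after an elementary convexity estimate, $q$-LWP($1$) when $p\le2$ and $q$-UWP($1$) when $p\ge2$ — together with the monotonicity of these laws in the exponent from \cite{CR}: $r$-LWP($1$) forces $s$-LWP($1$) for all $s\ge r$, and $r$-UWP($1$) forces $s$-UWP($1$) for all $1<s\le r$. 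For the one remaining upper law \eqref{wkparaeq5} I would use the $L^p$--$L^q$ duality principle of \cite{CR}: since the duality map $L^p\to L^q$ raises norms to the power $p/q$, an $r'$-LWP($D$) law for $L^q$ produces an $r$-UWP($D^{-p/q}$) law for $L^p$, and feeding in \eqref{wkparaeq2} written for $L^q$ (replace $p,r$ by $q,r'$, note $(q)'=p$ and $2\le r'\le p$ exactly when $q\le r\le2$) yields \eqref{wkparaeq5} together with its optimality. Everything thus reduces to \eqref{wkparaeq2}.

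To see that $C_{p,r}$ cannot be improved in \eqref{wkparaeq2}, take disjoint sets $A,B$ with $\sigma(A)=\sigma(B)=\tfrac12$ (or two coordinate vectors when $L^p$ is finite-dimensional), and for $0\le t<1$ put $\mathbf{x}=\mathbf{1}_A+t\,\mathbf{1}_B$, $\mathbf{y}=t\,\mathbf{1}_A+\mathbf{1}_B$. Then $\|\mathbf{x}\|_p^r=\|\mathbf{y}\|_p^r=\bigl(\tfrac{1+t^p}{2}\bigr)^{r/p}$, while $\mathbf{x}+\mathbf{y}=(1+t)\mathbf{1}_{A\cup B}$ and $\mathbf{x}-\mathbf{y}=(1-t)(\mathbf{1}_A-\mathbf{1}_B)$ give $\|\mathbf{x}+\mathbf{y}\|_p=1+t$ and $\|\mathbf{x}-\mathbf{y}\|_p=1-t$; substituting into \eqref{LWPeq} with exponent $r$ forces $(1+t)^r+C(1-t)^r\le 2^{\,r-r/p}(1+t^p)^{r/p}$, so $C\le C_{p,r}$ after the infimum over $t$.

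For the forward inequality — $L^p$ is $r$-LWP($C_{p,r}$) for $1<p\le2$ and $2\le r\le q$ — the substitution $\mathbf{u}=\mathbf{x}+\mathbf{y}$, $\mathbf{v}=\mathbf{x}-\mathbf{y}$ recasts \eqref{LWPeq} as
\[
\|\mathbf{u}\|_p^r+C_{p,r}\,\|\mathbf{v}\|_p^r\ \le\ \tfrac12\bigl(\|\mathbf{u}+\mathbf{v}\|_p^r+\|\mathbf{u}-\mathbf{v}\|_p^r\bigr),\qquad \mathbf{u},\mathbf{v}\in L^p.
\]
I would establish this by reducing to the two-dimensional space $\ell^p_2$: approximating $\mathbf{u},\mathbf{v}$ by simple functions over a common partition turns it into an inequality among finitely many weighted coordinates; a convexity/extreme-point argument localizes the worst configuration to boundedly many atoms; and inside $\ell^p_2$ the symmetries of the $p$-norm plus a further convexity step concentrate it on the family $\mathbf{x}=(1,t)$, $\mathbf{y}=(t,1)$ — at which point the inequality is exactly the defining inequality of $C_{p,r}$. (Alternatively one re-examines the extended Clarkson inequality of \cite{CR}, which already yields a law of this shape with a constant depending on an auxiliary parameter, and checks that optimising that parameter produces $C_{p,r}$; this too ends in a one-variable extremal problem.) Along the way one needs the properties of $C_{p,r}$ collected in Section~\ref{Crp}: positivity, continuity and monotonicity in $r$, the location of the infimum, and the endpoint identities $C_{p,2}=p-1$ and $C_{p,q}=1$, the last of which reconciles the overlapping ranges of $r$ in the statement.

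The hard part will be getting the \emph{sharp} constant rather than merely some positive one. Combining Clarkson's or Hanner's inequalities with power-mean estimates in the naive way loses the constant, because the equality cases of those inequalities are incompatible with the symmetric extremal family; so the crux is proving that the only obstructions to $r$-LWP($C$) in $L^p$ come from two-atom pairs — equivalently, performing the reduction to $\ell^p_2$ (or the parameter optimisation in the extended Clarkson inequality) with no loss — and then solving the resulting one-variable optimisation.
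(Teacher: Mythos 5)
Your skeleton matches the paper's: the four $C=1$ cases are optimal for free by the sign constraints on $C$, \eqref{wkparaeq5} is obtained from \eqref{wkparaeq2} by duality, and your two-atom family $\mathbf{x}=\mathbf{1}_A+t\,\mathbf{1}_B$, $\mathbf{y}=t\,\mathbf{1}_A+\mathbf{1}_B$ is exactly the paper's witness that no constant larger than $C_{p,r}$ can serve in \eqref{wkparaeq2}. The genuine gap is the forward half of \eqref{wkparaeq2}. You reduce it to an unproved claim: that a ``convexity/extreme-point argument'' localizes the worst configuration to boundedly many atoms and a ``further convexity step'' concentrates it on the symmetric family $(1,t),(t,1)$. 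That localization \emph{is} the theorem. The quantity $\|\mathbf{u}\|_p^r+C\|\mathbf{v}\|_p^r-\tfrac12\big(\|\mathbf{u}+\mathbf{v}\|_p^r+\|\mathbf{u}-\mathbf{v}\|_p^r\big)$ is a difference of convex functions of $(\mathbf{u},\mathbf{v})$, so no extreme-point principle applies off the shelf, and you supply no mechanism for carrying out the reduction ``with no loss.'' Your fallback---optimizing the auxiliary parameter in the extended Clarkson inequality of \cite{CR}---cannot rescue this: that route yields the constant $(p-1)^{r/2}$, which by Corollary \ref{ppsdpfoidsf} is in general strictly smaller than $C_{p,r}$.

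The device you are missing is that Hanner's inequality, applied to $F=\tfrac12(f+g)$ and $G=\tfrac12(f-g)$ rather than to $f$ and $g$ themselves, performs your two-atom reduction losslessly. Setting $2u=\|f+g\|_p+\|f-g\|_p$ and $2v=\|f+g\|_p-\|f-g\|_p$, Hanner gives $u^p+|v|^p\leq\|f\|_p^p+\|g\|_p^p$; the scalar inequality $|u+v|^r+C_{p,r}|u-v|^r\leq 2^{r-r/p}(|u|^p+|v|^p)^{r/p}$ (which, after the homogeneity normalizations $k(t)=|t|^rk(1/t)$ and $k(t)\geq k(|t|)$, is literally the definition of $C_{p,r}$---this is Lemma \ref{hposlem}) and the power-mean bound $(\|f\|_p^p+\|g\|_p^p)^{r/p}\leq 2^{r/p-1}(\|f\|_p^r+\|g\|_p^r)$ then deliver $r$-LWP($C_{p,r}$) for all of $L^p$. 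Your instinct that chaining Hanner with power-mean estimates ``loses the constant'' is wrong for this particular chaining: on the family $f=(1,t)$, $g=(t,1)$ one has $\|f\|_p=\|g\|_p$, and Hanner holds with equality for the above $F,G$, so every link in the chain is sharp simultaneously and the resulting constant coincides with the lower bound from your extremal example. Without this step (or an equivalent one) your treatment of the central case is a plan rather than a proof.
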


The alert reader probably has noticed that in Theorem \ref{MAINT} we seem to be missing a discussion of upper weak parallelogram inequalities for certain $r$ and $p$. This is not a oversight since, as discussed in \cite[Prop.~3.6]{CR}, $L^p$ satisfies no $r$-UWP law when $r>p$ or $r>2$, and no $r$-LWP law when $r<p$ or $r<2$. 
 
In Proposition \ref{constants-BJ} we discuss relationships between the weak parallelogram constants and the constants 
 of James, and of von Neumann and Jordan, which explore how far a given Banach space is from a Hilbert space.

\section{Background}\label{back}

A complex Hilbert space $\mathscr{H}$ with norm $\|\cdot\|$ satisfies the {\it Parallelogram Law}:  
\begin{equation}\label{paralaw}
     \|\mathbf{x} + \mathbf{y} \|^2 + \|\mathbf{x} - \mathbf{y} \|^2 = 2\big( \|\mathbf{x}\|^2 + \|\mathbf{y} \|^2  \big), \quad \mathbf{x}, \mathbf{y} \in \mathscr{H}.
\end{equation}
If we consider the parallelogram in $\mathscr{H}$ with vertices $\mathbf{0}$, $\mathbf{x}$, $\mathbf{y}$ and $\mathbf{x}+\mathbf{y}$,  this says that the sum of the squares of its diagonals equals the sum of the squares of its four sides.  
Conversely,  a celebrated theorem of Jordan and von Neumann \cite{MR1503247} (see also \cite[Ch.~7]{Day}) says that any Banach space $\mathscr{X}$ that satisfies the Parallelogram Law turns out to be a Hilbert space, where the inner product can be retrieved via the 
Polarization Identity
\begin{equation*}\label{polarizeq}
    \langle \mathbf{x},\mathbf{y} \rangle := \textstyle{\frac{1}{4}}\big( \|\mathbf{x} + \mathbf{y}\|^2
     + i\|\mathbf{x} + i\mathbf{y}\|^2 - \|\mathbf{x} - \mathbf{y}\|^2 -i\|\mathbf{x} -i \mathbf{y}\|^2 \big).
\end{equation*}
In fact, a little thought will show that if (\ref{paralaw}) is weakened to
\begin{equation}\label{paraineqeq}
     \|\mathbf{x} + \mathbf{y} \|^2 + \|\mathbf{x} - \mathbf{y} \|^2 \leq 2\big( \|\mathbf{x}\|^2 + \|\mathbf{y} \|^2  \big), \quad \mathbf{x}, \mathbf{y} \in \mathscr{X}
\end{equation}
then $\mathscr{X}$ is still compelled to be a Hilbert  space.

In light of this situation, it makes sense to explore the degree to which a Banach space departs from the Parallelogram Law, or from the inequality (\ref{paraineqeq}).  The weak parallelogram laws (\ref{LWPeq}) and (\ref{UWPeq}), by allowing an exponent $r$ other than 2, and by the insertion of the weak parallelogram constant $C$, provide an avenue for this type of exploration.   
The Parallelogram Law proper is the special case where both $2$-LWP($1$) and $2$-UWP($1$) hold, and we have already seem that this characterizes Hilbert spaces.  In the context of $L^p$ spaces, two of Clarkson's Inequalities \cite[p.~117]{Car} tell us that
\begin{align*}
     \|f + g \|_{L^p}^p +  \|f - g \|_{L^p}^p &\geq 2^{p-1}\big( \|f\|_{L^p}^p + \|g \|_{L^p}^p  \big), \quad 1 < p \leq 2,\\  
    \|f + g \|_{L^p}^p +  \|f - g \|_{L^p}^p &\leq 2^{p-1}\big( \|f\|_{L^p}^p + \|g \|_{L^p}^p  \big), \quad 2 \leq p < \infty.
\end{align*}
In other words, $L^p$ is $p$-UWP($1$) when $1<p\leq 2$, and is $p$-LWP($1$) when $2 \leq p <\infty$.  The weak parallelogram laws with $r=2$ were first studied by Bynum and Drew \cite{BD} in relation to the sequence space $\ell^p$, and in Bynum \cite{Byn} for general Banach spaces.  Cheng et al.~\cite{CMP} examined the $r=p$ case and derived the corresponding form of the Pythagorean theorem. In \cite{CR} the weak parallelogram laws were studied in their full generality, with connections made to the geometric notions of convexity, smoothness, and orthogonality.  It turns out, for example, that no weak parallelogram laws are satisfied by $L^1$ or $L^{\infty}$, since they fail to be uniformly convex or uniformly smooth \cite[Ch.~11]{Car}.   A duality theorem for the weak parallelogram laws was established in \cite{CH1}, along with further properties of the parameters.  It is also shown that the weak parallelogram laws are preserved under quotients, subspaces and Cartesian products.  This allows us to construct a wide range of weak parallelogram spaces.  

The weak parallelogram laws are a vehicle to describe the geometry of a Banach space, including how much it departs from that of a Hilbert space.  Another approach along these lines stems from the study of certain characteristic constants.  For example, the  von Neumann-Jordan Constant $C_{N\! J}$ \cite{Clax} (see also \cite{KMN}) for a Banach space $\mathscr{X}$ is the smallest value of $C$ for which
\[
     \frac{1}{C} \leq  \frac{\|\mathbf{x}+\mathbf{y}\|^2 + 
   \|\mathbf{x}-\mathbf{y}\|^2}{2(\|\mathbf{x}\|^2+\|\mathbf{y}\|^2)}\leq C
\]
for all $\mathbf{x}$ and $\mathbf{y}\in\mathscr{X}$, not both zero. It is known that $C_{N\! J}$ exists and $1 \leq C_{N\! J} \leq 2$.
The James Constant $J$ \cite{GL} of $\mathscr{X}$ is given by
\[
   J := \sup\{\min(\|\mathbf{x}+\mathbf{y}\|,\|\mathbf{x}-\mathbf{y}\|)\}
\]
as $\mathbf{x}$ and $\mathbf{y}$ vary over the unit sphere of $\mathscr{X}$.
The weak parallelogram laws relate to these constants in the following manner.
\begin{Proposition}\label{constants-BJ}
Let $\mathscr{X}$ be a Banach space.
\begin{enumerate}
\item[(i)]  If $\mathscr{X}$ is $2$-LWP($C$), then $C_{N\! J} \leq 1/C$;
\item[(ii)] If $\mathscr{X}$ is $2$-UWP($C$), then $C_{N\! J} \leq C$;
\item[(iii)] If $\mathscr{X}$ is $r$-LWP($C$), then $J \leq 2/(1+C)^{1/r}$;
\item[(iv)] If $\mathscr{X}$ is $r$-UWP($C$), then $J \leq 2^{1-2/r}(1+C)^{1/r}$.
\end{enumerate}
\end{Proposition}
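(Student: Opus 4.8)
The plan is to prove each of the four inequalities by choosing, for a given pair $\mathbf{x}, \mathbf{y}$ on the unit sphere (or a general pair), the most advantageous substitution into the relevant weak parallelogram law and then optimizing. The underlying principle is that the weak parallelogram laws control the quantity $\|\mathbf{x}+\mathbf{y}\|^r + C\|\mathbf{x}-\mathbf{y}\|^r$ in terms of $\|\mathbf{x}\|^r + \|\mathbf{y}\|^r$, and both $C_{N\!J}$ and $J$ are defined as suprema of ratios involving $\|\mathbf{x}+\mathbf{y}\|$ and $\|\mathbf{x}-\mathbf{y}\|$; so the job is bookkeeping with these inequalities plus elementary estimates.

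For (i) and (ii), set $r = 2$. From $2$-LWP($C$) we get $\|\mathbf{x}+\mathbf{y}\|^2 + C\|\mathbf{x}-\mathbf{y}\|^2 \le 2(\|\mathbf{x}\|^2+\|\mathbf{y}\|^2)$, hence also (replacing $\mathbf{y}$ by $-\mathbf{y}$) $\|\mathbf{x}-\mathbf{y}\|^2 + C\|\mathbf{x}+\mathbf{y}\|^2 \le 2(\|\mathbf{x}\|^2+\|\mathbf{y}\|^2)$; multiplying the first by $1$ and the second by $1/C$ and adding, or more directly just using the second inequality, gives $\|\mathbf{x}+\mathbf{y}\|^2 + \|\mathbf{x}-\mathbf{y}\|^2 \le (1 + 1/C)\cdot\frac{2}{?}$... more cleanly: from the swapped inequality $C\|\mathbf{x}+\mathbf{y}\|^2 \le 2(\|\mathbf{x}\|^2+\|\mathbf{y}\|^2) - \|\mathbf{x}-\mathbf{y}\|^2$, so $\|\mathbf{x}+\mathbf{y}\|^2 + \|\mathbf{x}-\mathbf{y}\|^2 \le \frac{2}{C}(\|\mathbf{x}\|^2+\|\mathbf{y}\|^2) + (1 - \tfrac1C)\|\mathbf{x}-\mathbf{y}\|^2$; since $C \le 1$ this is not immediately clean, so instead I would add the original and swapped inequalities to get $(1+C)(\|\mathbf{x}+\mathbf{y}\|^2 + \|\mathbf{x}-\mathbf{y}\|^2) \le 4(\|\mathbf{x}\|^2+\|\mathbf{y}\|^2)$, giving the upper bound $\frac{\|\mathbf{x}+\mathbf{y}\|^2+\|\mathbf{x}-\mathbf{y}\|^2}{2(\|\mathbf{x}\|^2+\|\mathbf{y}\|^2)} \le \frac{2}{1+C}$, which is $\le 1/C$ exactly when $C \le 1$ — true for LWP. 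For the lower bound in the $C_{N\!J}$ definition, $\|\mathbf{x}+\mathbf{y}\|^2 + \|\mathbf{x}-\mathbf{y}\|^2 \ge$ (each term nonneg, and LWP with one term dropped) ... actually $\|\mathbf{x}+\mathbf{y}\|^2 \le 2(\|\mathbf{x}\|^2+\|\mathbf{y}\|^2)$ and likewise for the difference are the needed pieces, yielding ratio $\ge 1/C$ as well; I will check the precise constants. The UWP case (ii) is symmetric, reversing the inequalities and using $C \ge 1$.

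For (iii) and (iv) I work on the unit sphere, so $\|\mathbf{x}\| = \|\mathbf{y}\| = 1$ and the right side of the weak parallelogram law is $2^{r-1}\cdot 2 = 2^r$. From $r$-LWP($C$): $\|\mathbf{x}+\mathbf{y}\|^r + C\|\mathbf{x}-\mathbf{y}\|^r \le 2^r$. Let $m = \min(\|\mathbf{x}+\mathbf{y}\|, \|\mathbf{x}-\mathbf{y}\|)$; then the left side is $\ge m^r + C m^r = (1+C)m^r$ (using that each of the two norms is at least $m$ and $C > 0$), so $(1+C)m^r \le 2^r$, i.e. $m \le 2/(1+C)^{1/r}$; taking sup gives $J \le 2/(1+C)^{1/r}$. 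For (iv), from $r$-UWP($C$) the inequality points the other way: $\|\mathbf{x}+\mathbf{y}\|^r + C\|\mathbf{x}-\mathbf{y}\|^r \ge 2^r$. Now I need an \emph{upper} bound on $m$, so I bound the left side from above in terms of $m$ and the larger quantity $M$. By the triangle inequality $\|\mathbf{x}+\mathbf{y}\| + \|\mathbf{x}-\mathbf{y}\| \le 2(\|\mathbf{x}\|+\|\mathbf{y}\|)=4$... but I actually want $\|\mathbf{x}+\mathbf{y}\|^2 + \|\mathbf{x}-\mathbf{y}\|^2 \le 2(\|\mathbf{x}\|^2+\|\mathbf{y}\|^2) $? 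No — that's the Hilbert inequality and need not hold. The right tool is the elementary bound $\max(\|\mathbf{x}+\mathbf{y}\|,\|\mathbf{x}-\mathbf{y}\|) \le \|\mathbf{x}\|+\|\mathbf{y}\| = 2$, so with $M \le 2$ and $m = \min$, the UWP inequality gives $2^r \le M^r + C m^r \le$ ... hmm, this still has two unknowns. The cleaner route: both $\|\mathbf{x}\pm\mathbf{y}\| \le 2$, and if $m = \min$, then one of the two norms equals $m$; bounding the \emph{other} by $2$ gives $2^r \le 2^r + C m^r$ trivially — useless. Instead I would use $\|\mathbf{x}+\mathbf{y}\|^{r} \le 2^{r-2}\|\mathbf{x}+\mathbf{y}\|^{2}\cdot$ ... no. The right identity is the finite-dimensional fact $\|\mathbf x+\mathbf y\|^2+\|\mathbf x-\mathbf y\|^2 \le 2^{2-2/r}(\ldots)$? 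Given the stated constant $2^{1-2/r}(1+C)^{1/r}$, the intended manipulation is surely: from UWP, $2^{r-1}\cdot 2 \le \|\mathbf x+\mathbf y\|^r + C\|\mathbf x-\mathbf y\|^r$, and since $\|\mathbf x+\mathbf y\|^r \le 2^{r-1}(\|\mathbf x\|^r+\|\mathbf y\|^r)\cdot$ — wait, actually $\|\mathbf x + \mathbf y\|^r \le (\|\mathbf x\|+\|\mathbf y\|)^r = 2^r$, so $\|\mathbf x+\mathbf y\|^r \le 2^{r-1}(\|\mathbf x\|^r + \|\mathbf y\|^r) \cdot$ no that's $2^{r-1}\cdot 2 = 2^r$, same thing. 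So $2^r \le 2^r \cdot (\text{ratio}) + C m^r$ doesn't pin $m$ down unless I instead write, with $M$ the max, $2^r \le M^r + C m^r$ and separately $M \le 2$: that gives $C m^r \ge 2^r - M^r \ge 0$, still vacuous. The resolution must be that $M$ and $m$ are \emph{not} independent — indeed $\|\mathbf x+\mathbf y\|^2 + \|\mathbf x - \mathbf y\|^2 \ge 2(\|\mathbf x\|^2 + \|\mathbf y\|^2)$ need not hold, but $\|\mathbf x + \mathbf y\|^2 + \|\mathbf x - \mathbf y\|^2 \le 4\max(\|\mathbf x\|,\|\mathbf y\|)^2 \cdot$... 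I think the actual argument applies UWP to the rescaled pair. The key step, and the one I expect to be the main obstacle, is exactly this: extracting an \emph{upper} bound on $\min(\|\mathbf x+\mathbf y\|,\|\mathbf x-\mathbf y\|)$ from UWP, for which I would apply the UWP inequality not to $(\mathbf x,\mathbf y)$ but to the pair $\mathbf u = (\mathbf x+\mathbf y)/2$, $\mathbf v = (\mathbf x - \mathbf y)/2$ (so $\mathbf u + \mathbf v = \mathbf x$, $\mathbf u - \mathbf v = \mathbf y$, both unit), obtaining $1 + 1 \ge 2^{r-1}(\|\mathbf u\|^r + \|\mathbf v\|^r)$, i.e. $\|\mathbf u\|^r + \|\mathbf v\|^r \le 2^{2-r}$; combined with $\min(\|\mathbf u\|,\|\mathbf v\|) \le$ the power-mean comparison and the scaling $m = 2\min(\|\mathbf u\|,\|\mathbf v\|)$, plus the constraint $\|\mathbf u\|^r+\|\mathbf v\|^r \le 2^{2-r}$ together with $\|\mathbf u\| + \|\mathbf v\| \ge \|\mathbf u + \mathbf v\| = 1$, a Lagrange-type optimization yields $\min \le 2^{-2/r}(1+C)^{1/r}$ after also invoking the original UWP — I would set up this constrained extremization carefully and verify it produces precisely $2^{1-2/r}(1+C)^{1/r}$. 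Throughout, I would double-check the edge behavior at $r = 2$ (where (iii) and (iv) should be consistent with (i) and (ii) via $J^2 \le 2C_{N\!J}$ type relations) as a sanity check.
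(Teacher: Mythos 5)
Parts (i) and (iii) of your proposal are essentially correct, and (iii) is exactly the paper's argument: $(1+C)\min\{\|\mathbf{x}+\mathbf{y}\|^r,\|\mathbf{x}-\mathbf{y}\|^r\}\leq \|\mathbf{x}+\mathbf{y}\|^r+C\|\mathbf{x}-\mathbf{y}\|^r\leq 2^r$ on the unit sphere. In (i) your averaging of the law with its $\mathbf{y}\mapsto-\mathbf{y}$ version gives the ratio bound $2/(1+C)\leq 1/C$, which is fine (the paper instead uses $C(\|\mathbf{x}+\mathbf{y}\|^2+\|\mathbf{x}-\mathbf{y}\|^2)\leq \|\mathbf{x}+\mathbf{y}\|^2+C\|\mathbf{x}-\mathbf{y}\|^2$, same outcome). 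Two loose ends: your parenthetical claim that the ratio is also ``$\geq 1/C$'' is not what your inequalities give; the clean observation is that replacing $(\mathbf{x},\mathbf{y})$ by $(\mathbf{x}+\mathbf{y},\mathbf{x}-\mathbf{y})$ inverts the von Neumann--Jordan ratio, so the upper bound alone suffices. And in (ii), ``symmetric, reversing the inequalities'' is not yet an argument: UWP applied directly to $(\mathbf{x},\mathbf{y})$ only bounds the ratio from below; to get the needed upper bound you must apply UWP to the pair $\bigl(\tfrac{1}{2}(\mathbf{x}+\mathbf{y}),\tfrac{1}{2}(\mathbf{x}-\mathbf{y})\bigr)$, which gives $\|\mathbf{x}+\mathbf{y}\|^2+\|\mathbf{x}-\mathbf{y}\|^2\leq 2(\|\mathbf{x}\|^2+C\|\mathbf{y}\|^2)\leq 2C(\|\mathbf{x}\|^2+\|\mathbf{y}\|^2)$ --- precisely the paper's proof of (ii), and the same substitution you eventually reach in (iv).

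The genuine gap is (iv). You do land on the right device (apply UWP to $\mathbf{u}=\tfrac{1}{2}(\mathbf{x}+\mathbf{y})$, $\mathbf{v}=\tfrac{1}{2}(\mathbf{x}-\mathbf{y})$), but the inequality you extract, $1+1\geq 2^{r-1}(\|\mathbf{u}\|^r+\|\mathbf{v}\|^r)$, is wrong: the left side of UWP for this pair is $\|\mathbf{u}+\mathbf{v}\|^r+C\|\mathbf{u}-\mathbf{v}\|^r=\|\mathbf{x}\|^r+C\|\mathbf{y}\|^r=1+C$, and since $C\geq 1$ you cannot replace $1+C$ by $2$. Moreover the ``Lagrange-type optimization'' that is supposed to finish the proof is never set up, so as written (iv) is not proved. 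In fact no optimization is needed: the substitution yields $\|\mathbf{x}+\mathbf{y}\|^r+\|\mathbf{x}-\mathbf{y}\|^r\leq 2(\|\mathbf{x}\|^r+C\|\mathbf{y}\|^r)$, so on the unit sphere $2\min\{\|\mathbf{x}+\mathbf{y}\|,\|\mathbf{x}-\mathbf{y}\|\}^r\leq 2(1+C)$, i.e.\ $J\leq(1+C)^{1/r}$, and since $2^{1-2/r}\geq 1$ for $r\geq 2$ this implies the stated bound $2^{1-2/r}(1+C)^{1/r}$; this is where the paper's chain uses the weaker middle estimate $\leq 2^{r-1}(\|\mathbf{x}\|^r+C\|\mathbf{y}\|^r)$. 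Note that for $r<2$ neither your route nor this one produces the extra factor $2^{1-2/r}<1$, so the displayed constant in (iv) should be understood as the $r\geq 2$ bound; your instinct to sanity-check the $r=2$ case against (ii) is a good one.
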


\begin{proof}
Suppose that $\mathscr{X}$ is $2$-LWP($C$).  Then, as mentioned in the introduction,  $0 < C \leq 1$, and hence
\begin{align*} 
     C\|\mathbf{x} + \mathbf{y} \|^2 +  C\|\mathbf{x} - \mathbf{y} \|^2
     & \leq \|\mathbf{x} + \mathbf{y} \|^2 +  C\|\mathbf{x} - \mathbf{y} \|^2 \\
     & \leq 2\big( \|\mathbf{x}\|^2 + \|\mathbf{y} \|^2  \big).
\end{align*}
This proves that $C_{N\! J} \leq 1/C$, as claimed.

Next, suppose that $\mathscr{X}$ is $2$-UWP($C$).  That is,
\[
       \|\mathbf{x} + \mathbf{y} \|^2 +  C\|\mathbf{x} - \mathbf{y} \|^2 \geq 2\big( \|\mathbf{x}\|^2 + \|\mathbf{y} \|^2  \big)
\]
for all $\mathbf{x}$ and $\mathbf{y} \in \mathscr{X}$.  By writing $\mathbf{X} = \mathbf{x}+\mathbf{y}$ and $\mathbf{Y}=\mathbf{x}-\mathbf{y}$, we can restate this as
\[
        \|\mathbf{X}\|^2 + C\|\mathbf{Y}\|^2 \geq 2\Big( \Big\| \frac{\mathbf{X}+\mathbf{Y}}{2} \Big\|^2 +  \Big\| \frac{\mathbf{X}-\mathbf{Y}}{2} \Big\|^2 \Big),
\]
or, reverting back to lowercase letters,
\[
      \|\mathbf{x} + \mathbf{y} \|^2 +  \|\mathbf{x} - \mathbf{y} \|^2 \leq 2\big( \|\mathbf{x}\|^2 + C\|\mathbf{y} \|^2  \big).
\]
Since, again as in the introduction, $1 \leq C <\infty$ in this case, the right side of this last inequality is bounded by
$
     2C( \|\mathbf{x}\|^2 + \|\mathbf{y} \|^2 ).
$
It follows that $C_{N\! J} \leq C$.

We now assume that $\mathscr{X}$ is $r$-LWP($C$).  Then for any $\mathbf{x}$ and $\mathbf{y} \in \mathscr{X}$,
\begin{align*}
    (1+C) \min \{\|\mathbf{x} + \mathbf{y} \|^r,\|\mathbf{x} - \mathbf{y} \|^r\} & \leq
        \|\mathbf{x} + \mathbf{y} \|^r +  C\|\mathbf{x} - \mathbf{y} \|^r\\
        & \leq 2^{r-1}\big( \|\mathbf{x}\|^r + \|\mathbf{y} \|^r  \big).
\end{align*}
Take the supremum of the initial expression over the unit sphere of $\mathscr{X}$, and extract $r$th roots, to obtain
$
     J \leq 2/(1+C)^{1/r}.
$

Finally if $\mathscr{X}$ is $r$-UWP($C$), then
\begin{align*}
     2 \min \{\|\mathbf{x} + \mathbf{y} \|^r,\|\mathbf{x} - \mathbf{y} \|^r\} & 
       \leq
        \|\mathbf{x} + \mathbf{y} \|^r +  \|\mathbf{x} - \mathbf{y} \|^r\\
        & \leq 2^{r-1}\big( \|\mathbf{x}\|^r + C\|\mathbf{y} \|^r  \big).
\end{align*}
From this we deduce that 
$
     J \leq 2^{1-2/r}(1+C)^{1/r}.
$
\end{proof}

There is ongoing interest in establishing connections between these parameters and the structure and behavior of the space \cite{Clax,JP,Kae,KMN,Sha,Wan,Yan,YW}. 

The weak parallelogram laws interact fruitfully with a generalized notion of orthogonality.  We say that two vectors $\mathbf{x}$ and $\mathbf{y}$  (in this order) in a Banach space $\mathscr{X}$ are orthogonal in the Birkhoff-James sense \cite{Jam}, and write $\mathbf{x} \perp_{\mathscr{X}} \mathbf{y}$, if
\[
    \| \mathbf{x} + t\mathbf{y} \| \geq \| \mathbf{x} \|
\]
for all scalars $t$.  A short exercise will show that this definition is equivalent to the traditional one when $\mathscr{X}$ is a Hilbert space.  More generally, however, it fails to be linear and symmetric. 
Weak parallelogram spaces enjoy a sort of Pythagorean Theorem with respect to Birkhoff-James orthogonality.  The following is from \cite{CR}.
\begin{Theorem}
Let $1 < r < \infty$. 
If  a smooth Banach space  $\mathscr{X}$ is $r$-LWP($C$),  then there exists a positive constant $K$ such that whenever $\mathbf{x} \perp_{\mathscr{X}} \mathbf{y}$ in $\mathscr{X}$, 
\begin{equation*}\label{LPyth}
  \| \mathbf{x} \|^r  +  K\|\mathbf{y}\|^r \leq \|\mathbf{x} + \mathbf{y} \|^r.
\end{equation*}
If $\mathscr{X}$ is $r$-UWP($C$),  then there exists a positive constant $K$ such that  whenever $\mathbf{x} \perp_{\mathscr{X}} \mathbf{y}$ in $\mathscr{X}$, 
\begin{equation*}\label{UPyth}
  \| \mathbf{x} \|^r  +  K\|\mathbf{y}\|^r \geq \|\mathbf{x} + \mathbf{y} \|^r
\end{equation*}
In either case, the constant $K$ can be chosen to be ${C}/{(2^{r-1}-1)}$.
\end{Theorem}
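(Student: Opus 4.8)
The plan is to compress the whole statement into one scalar function. Fix a pair with $\mathbf{x}\perp_{\mathscr{X}}\mathbf{y}$ (the cases $\mathbf{x}=\mathbf{0}$ or $\mathbf{y}=\mathbf{0}$ being straightforward), and set $h(t):=\|\mathbf{x}+t\mathbf{y}\|^{r}$ for $t\in\mathbb{R}$. Three properties of $h$ will carry the argument: it is convex, being the composition of the nondecreasing convex map $s\mapsto s^{r}$ with the convex map $t\mapsto\|\mathbf{x}+t\mathbf{y}\|$; by the definition of Birkhoff--James orthogonality it attains its minimum at $t=0$, so $h(t)\ge h(0)=\|\mathbf{x}\|^{r}$ for every $t$; and, since $\mathscr{X}$ is smooth, $t\mapsto\|\mathbf{x}+t\mathbf{y}\|$ is differentiable at $0$, hence so is $h$, whence $h'(0)=0$.

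The first substantive step is to convert the weak parallelogram law into a quantitative statement about $h$. Feeding $\mathbf{u}=\mathbf{x}+a\mathbf{y}$ and $\mathbf{v}=\mathbf{x}+b\mathbf{y}$ into \eqref{LWPeq}, and using $\mathbf{u}+\mathbf{v}=2\big(\mathbf{x}+\tfrac{a+b}{2}\mathbf{y}\big)$ and $\mathbf{u}-\mathbf{v}=(a-b)\mathbf{y}$, one gets after dividing by $2^{r-1}$
\[
   2\,h\!\left(\frac{a+b}{2}\right)+\frac{C\,|a-b|^{r}}{2^{r-1}}\,\|\mathbf{y}\|^{r}\ \le\ h(a)+h(b),\qquad a,b\in\mathbb{R},
\]
with the inequality reversed if \eqref{UWPeq} is used in place of \eqref{LWPeq}. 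In words, the midpoint defect $h(a)+h(b)-2h\big(\tfrac{a+b}{2}\big)$ is at least (respectively, at most) $C\,2^{1-r}|a-b|^{r}\|\mathbf{y}\|^{r}$ --- a strengthened midpoint-convexity estimate (respectively, a uniform-smoothness-type bound).

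Next I would iterate this over the dyadic mesh. Writing $d_{k}:=h(2^{-k})-h(0)\ge 0$ and taking $a=2^{-(k-1)}$, $b=0$ in the displayed inequality gives $d_{k-1}-2d_{k}\ge 2C\,2^{-kr}\|\mathbf{y}\|^{r}$ in the LWP case (and the reverse in the UWP case). Multiplying the $k$-th such relation by $2^{k-1}$ and summing over $k=1,\dots,n$ telescopes the left side to $d_{0}-2^{n}d_{n}$, while the right side becomes $C\|\mathbf{y}\|^{r}\sum_{k=1}^{n}(2^{1-r})^{k}$; since $r>1$ this geometric series sums to $1/(2^{r-1}-1)$. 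In the LWP case one may simply discard the nonnegative term $2^{n}d_{n}$ and let $n\to\infty$ to obtain $d_{0}\ge C\|\mathbf{y}\|^{r}/(2^{r-1}-1)$, that is $\|\mathbf{x}+\mathbf{y}\|^{r}\ge\|\mathbf{x}\|^{r}+K\|\mathbf{y}\|^{r}$ with $K=C/(2^{r-1}-1)$. In the UWP case one instead needs $2^{n}d_{n}=\big(h(2^{-n})-h(0)\big)\big/2^{-n}\to h'(0)=0$, which yields $d_{0}\le C\|\mathbf{y}\|^{r}/(2^{r-1}-1)$, the reverse inequality with the same $K$.

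I expect the delicate point to be precisely that limit $2^{n}d_{n}\to0$ needed in the UWP case: convexity together with the minimum at $0$ only force the one-sided derivative $h'_{+}(0)$ to be nonnegative, and a strictly positive value there would degrade the constant, so smoothness is genuinely used (for the LWP half it is not). The only other step calling for a moment's thought is the choice of the vectors $\mathbf{u}=\mathbf{x}+a\mathbf{y}$, $\mathbf{v}=\mathbf{x}+b\mathbf{y}$, engineered so that the diagonal terms of the parallelogram law become values of $h$ at an affine preimage of the midpoint while the cross term is a clean multiple of $\|\mathbf{y}\|^{r}$; after that the bookkeeping is routine and, conveniently, outputs exactly the constant $1/(2^{r-1}-1)$.
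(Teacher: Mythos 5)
Your argument is correct, and there is nothing in this paper to compare it against: the theorem is quoted from \cite{CR} and no proof is reproduced here. Your proof is a valid, self-contained derivation, and it is in the same spirit as the original one in \cite{CR} (iterate the weak parallelogram law along a shrinking sequence of increments of $\mathbf{y}$ and sum the resulting geometric series). The key computations check out: substituting $\mathbf{u}=\mathbf{x}+a\mathbf{y}$, $\mathbf{v}=\mathbf{x}+b\mathbf{y}$ does give the strengthened midpoint inequality $2h\big(\tfrac{a+b}{2}\big)+C2^{1-r}|a-b|^{r}\|\mathbf{y}\|^{r}\le h(a)+h(b)$; the dyadic choice $a=2^{-(k-1)}$, $b=0$ gives $d_{k-1}-2d_k\ge 2C2^{-kr}\|\mathbf{y}\|^r$; and the weights $2^{k-1}$ telescope exactly, with $\sum_{k\ge1}(2^{1-r})^k=1/(2^{r-1}-1)$, so the constant $K=C/(2^{r-1}-1)$ comes out on the nose. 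You also correctly isolate where each hypothesis enters: Birkhoff--James orthogonality gives $d_n\ge0$, which alone finishes the LWP half, while the UWP half needs $2^n d_n\to h'_+(0)=0$, i.e.\ smoothness at $\mathbf{x}\ne\mathbf{0}$ together with the minimum of $h$ at $t=0$. The only items left implicit are the degenerate cases: for $\mathbf{y}=\mathbf{0}$ there is nothing to prove, and for $\mathbf{x}=\mathbf{0}$ the claim reduces to $K\le1$ (LWP) or $K\ge1$ (UWP), which follow by putting $\mathbf{y}=\mathbf{0}$ in the respective law to get $1+C\le 2^{r-1}$, resp.\ $1+C\ge 2^{r-1}$; it would be worth one line to record this, but it is indeed straightforward as you assert.
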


\section{The weak parallelogram constant $C_{p, r}$}\label{Crp}

Towards proving Theorem \ref{MAINT}, let us establish some basic facts about the weak parallelogram constant $C_{p, r}$ appearing in \eqref{wkparaeq2} and \eqref{wkparaeq5}. 
\begin{Lemma}\label{defcpqlem}
Suppose that $1 < p \leq 2$ and $2 \leq r \leq q$, where $1/p + 1/q=1$.  Then the constant $C_{p,r}$ defined by
\begin{equation}\label{cprdefeq}
       C_{p,r} := \inf_{0 \leq t < 1} \frac{2^{r-r/p}(1+t^p)^{r/p}-(1+t)^r}{(1-t)^r} 
\end{equation}
satisfies
$
     0 < C_{p,r} \leq 1.
$
\end{Lemma}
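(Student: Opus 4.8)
The plan is to write the function under the infimum as
\[
 h(t) := \frac{2^{r-r/p}(1+t^p)^{r/p}-(1+t)^r}{(1-t)^r}, \qquad 0 \leq t < 1,
\]
and to prove the two bounds $C_{p,r} = \inf_{[0,1)} h \leq 1$ and $C_{p,r} > 0$ separately. The upper bound is immediate from a single test value: at $t=0$ one has $h(0) = 2^{r-r/p} - 1 = 2^{r/q}-1$, and since the hypothesis gives $r \leq q$, this is at most $2^{1}-1 = 1$. Hence $C_{p,r} \leq h(0) \leq 1$. (In fact, the limit computation below gives the sharper bound $C_{p,2} \leq p-1$ in the case $r=2$.)

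For the lower bound, the first step is to observe that $h$ is continuous and strictly positive on $[0,1)$, which reduces to showing the numerator is positive there. Taking $r$-th roots and then $p$-th powers --- both monotone operations on the positive reals --- the inequality $2^{r-r/p}(1+t^p)^{r/p} > (1+t)^r$ is equivalent to $2^{p-1}(1+t^p) > (1+t)^p$, which is exactly strict midpoint convexity of $x \mapsto x^p$ and therefore holds for all $t \in [0,1)$ because $p > 1$.

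It then remains to exclude the possibility that the infimum of this positive continuous function is $0$, and for that I would examine the behavior of $h$ as $t \to 1^-$. Substituting $t = 1-s$ and Taylor expanding, the first-order terms in $s$ cancel in the numerator, leaving $2^{r-r/p}(1+t^p)^{r/p} - (1+t)^r = 2^{r-3}r(p-1)\,s^2 + O(s^3)$, while the denominator is $s^r$. Because $r \geq 2$, this yields $h(t) \to p-1$ when $r=2$ and $h(t) \to +\infty$ when $r > 2$; in either case $\lim_{t\to 1^-} h(t)$ exists in $(0,+\infty]$. So there is $\delta \in (0,1)$ with $h \geq \tfrac{1}{2}\min\{p-1,1\}$ on $(1-\delta,1)$, while on the compact interval $[0,1-\delta]$ the continuous positive function $h$ attains a positive minimum; the smaller of these two positive numbers is a positive lower bound for $C_{p,r}$.

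The only genuine computation is the second-order expansion of the numerator near $t=1$, which is forced by the cancellation of the first-order terms; this is also precisely where the hypothesis $r \geq 2$ is used, since for $1 < r < 2$ the same expansion would give $h(t)\to 0$. One can streamline the last step by noting that $(1-t)^{2-r}\geq 1$ for $r\geq 2$ and $0\leq t<1$, so that $h(t) \geq \big(2^{r-r/p}(1+t^p)^{r/p}-(1+t)^r\big)/(1-t)^2$; the quotient on the right is continuous and positive on $[0,1)$ and tends to the finite positive limit $2^{r-3}r(p-1)$, hence has positive infimum, which again gives $C_{p,r}>0$.
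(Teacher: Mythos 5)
Your proposal is correct, and its overall skeleton (positivity of the numerator on $[0,1)$, control of the behavior as $t\to 1^-$, and the test value $t=0$ for the bound $C_{p,r}\leq 2^{r/q}-1\leq 1$) matches the paper's; the genuine difference is in how the lower bound is obtained. The paper first treats $r=2$ by two applications of L'H\^{o}pital's rule to get the limit $p-1$ at $t=1$, and then handles general $r\in[2,q]$ by differentiating the quotient with respect to $r$ and showing it is nondecreasing in $r$, so that the liminf at $t=1$ is at least $p-1$; you instead compute the second-order Taylor expansion of the numerator at $t=1$ directly for all $r\geq 2$, obtaining $2^{r-3}r(p-1)(1-t)^2+O((1-t)^3)$ (your coefficient is correct, and it agrees with the expansion the paper itself performs after the lemma when showing the infimum is attained at an interior point for $r>2$), and then conclude by compactness, or more slickly via $(1-t)^r\leq(1-t)^2$ so that the quotient is bounded below by a function extending continuously and positively to $[0,1]$. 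Your route is more self-contained and makes transparent exactly where $r\geq 2$ enters (for $1<r<2$ the same expansion shows the infimum would be $0$), while the paper's monotonicity-in-$r$ argument yields the extra structural fact that the quotient, hence the bound, is nondecreasing in $r$; your use of midpoint convexity of $x\mapsto x^p$ in place of the paper's two-point H\"{o}lder inequality for the positivity of the numerator is essentially the same estimate in different clothing.
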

\begin{proof}  For any $t \in [0, 1)$, 
\begin{equation}\label{lp2dineq}
    (1+t)^r \leq 2^{r - r/p} (1 + t^p)^{r/p},  
\end{equation}
Indeed,  H\"{o}lder's inequality gives
   \begin{align*}
      (1+t)^r &=  (1\cdot 1 + t\cdot 1)^r \\
             &\leq  (1^p + t^p)^{r/p} (1^q + 1^q)^{r/q} \nonumber \\
             &=  2^{r - r/p} (1 + t^p)^{r/p},\nonumber
   \end{align*}
   and hence the numerator of the right side of  \eqref{cprdefeq} is nonnegative.  In fact, the condition for equality in \eqref{lp2dineq} will imply equality in our use of H\"{o}lder's inequality. This would mean that the vectors $(1, 1)$ and $(1, t)$ are linearly dependent -- which only holds when $t = 1$.  The denominator is obviously positive as well, and hence the fraction itself is positive for all $t \in [0, 1)$.
   
Let us look at the case $r=2$.  Two applications of L'H\^{o}pital's Rule yield 
$$\lim_{t\rightarrow 1-} \frac{2^{2-2/p}(1+t^p)^{2/p}-(1+t)^2}{(1-t)^2} = (2-p)+2(p-1) -1 > 0.$$
Thus the infimum in (\ref{cprdefeq}) must be positive when $r=2$. When $r=2$, the infimum in \eqref{cprdefeq} occurs in the limit as $t$ increases to 1.  Bynum and Drew \cite{BD} derived the value of this limit, with the result
$C_{p,2} = p-1$ when $1<p\leq 2$ (see Figure \ref{Four}).
\begin{figure}
 \includegraphics[width=.6\textwidth]{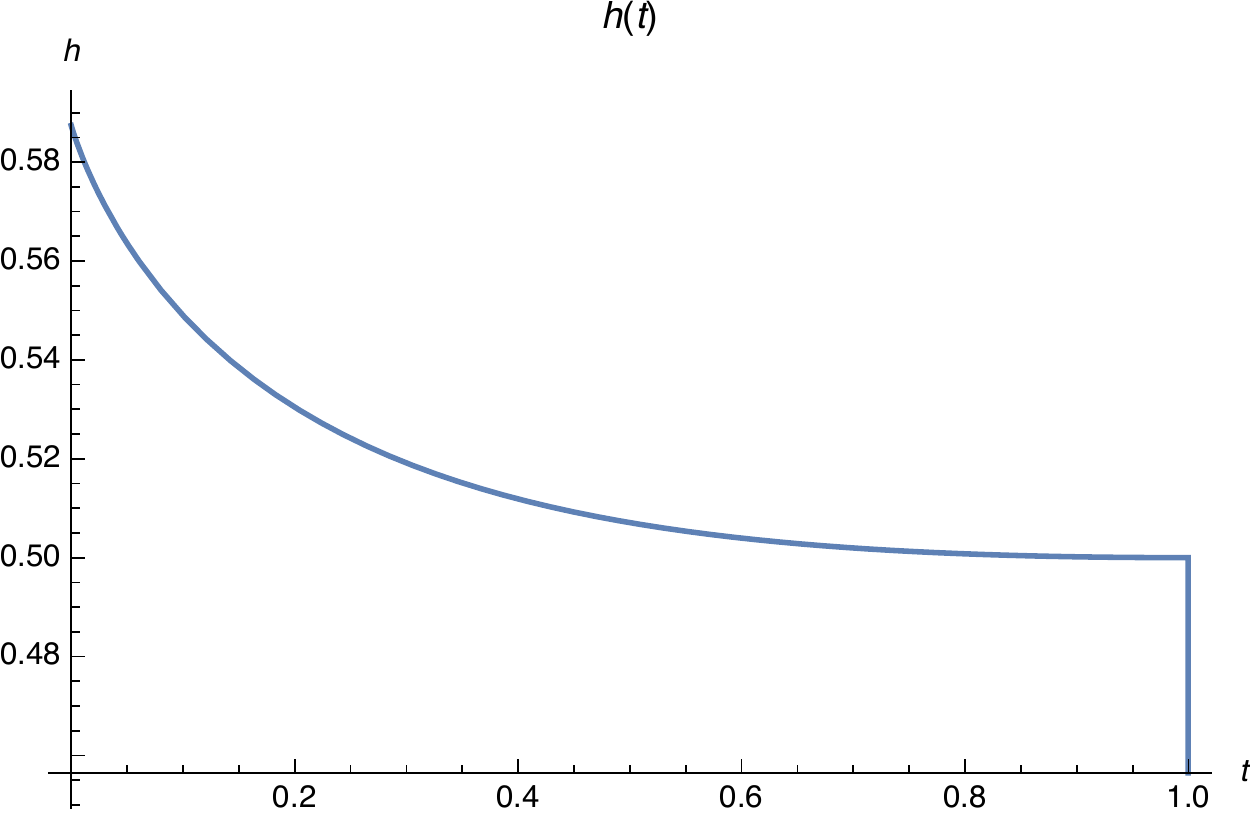}
 \caption{The graph of $h(t)$ from \eqref{hoft} with $p = 3/2$ and $r = 2$. Observe how this says that $C_{3/2, 2} = 1/2$. } 
 \label{Four}
\end{figure}

Next, for $r \in [2,q]$, $1<p\leq 2$, and $0\leq t <1$, consider
\begin{align*}
       & \frac{d}{dr} \left[\frac{2^{r-r/p}(1+t^p)^{r/p}-(1+t)^r}{(1-t)^r} \right]\\
       &=  \left\{(1-t)^r\Big[ 2^{r-r/p}(1+t^p)^{r/p}\log [2^{1-1/p}(1+t^{p})^{1/p}] - (1+t)^r\log(1+t)  \Big]\right.\\
       &\quad - \left.\Big[ \big( 2^{r-r/p}(1+t^p)^{r/p}-(1+t)^r\big)(1-t)^r \log(1-t) \Big]\right\}/(1-t)^{2r}.
\end{align*}
The final expression is nonnegative.  To see this, we use the fact that the function $\psi(s) = s^r\log s$ is increasing on interval $[1,\infty)$, the negativity of the factor $\log(1-t)$, and the previously derived inequality (\ref{lp2dineq}).   This shows that the expression
\[
       \frac{2^{r-r/p}(1+t^p)^{r/p}-(1+t)^r}{(1-t)^r}
\]
from the defining equation (\ref{cprdefeq}) is a nondecreasing function of $r$ for each $p$ and $t$.  It follows that 
\[
   \varliminf_{t\rightarrow 1-}  \frac{2^{r-r/p}(1+t^p)^{r/p}-(1+t)^r}{(1-t)^r} \geq (p-1) >0.
\]
 This forces $C_{p,r}$ to be positive for all parameter values.
 Finally, by substituting $t=0$ we find that
 \begin{align*}
     C_{p,r} &= \inf_{0 \leq t < 1} \frac{2^{r-r/p}(1+t^p)^{r/p}-(1+t)^r}{(1-t)^r} 
                 \leq  2^{r-r/p}-1 
                 \leq 1. \qedhere
\end{align*}
\end{proof}

On the other hand, when $2<r\leq q$, the value of $C_{p,r}$ is attained in (\ref{cprdefeq}) at some interior point $t$ of $[0,1)$.  To see this, note that a calculation will show that 
\begin{align*}
     & \quad \frac{d}{dt} \left[\frac{2^{r-r/p}(1+t^p)^{r/p}-(1+t)^r}{(1-t)^r}\right] \\
     & = \frac{ r(1-t)^{r-1} [2^{r/q}(1+t^p)^{r/p-1}(1+t^{p-1})-2(1+t)^{r-1}] }{(1-t)^{2r}}.
\end{align*}
When $t=0$, this takes the negative value $r(2^{r/q}-2)$.   On the other hand, if we write $t = 1-\epsilon$, and consider $\epsilon$ decreasing to zero, we obtain
$$\frac{2^{r-r/p}(1+t^p)^{r/p}-(1+t)^r}{(1-t)^r}  =  \frac{2^{r-r/p}(1+[1-\epsilon]^p)^{r/p}-(2-\epsilon)^r}{\epsilon^r}.$$
Written as a power series in $\epsilon$ the above becomes 
$$\left\{ 2^r \Big(\frac{r(p-1)}{8}\Big)\epsilon^2 + O(\epsilon^3) \right\}\epsilon^{-r},
$$
which, since $r > 2$, diverges to $+\infty$ as $\epsilon  \to 0$.

We are unable to present a closed form expression for $C_{p,r}$, though we can obtain some numerical estimates (see the examples below), and instead offer the following simple bounds.
\begin{Corollary}\label{ppsdpfoidsf}
   If $1<p<2<r\leq q$, where $1/p+1/q=1$, then 
   \[
         (p-1)^{r/2} \leq C_{p,r} < 2^{r/q} -1.
   \]
\end{Corollary}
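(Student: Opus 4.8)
The two estimates are proved separately; the lower one carries the content. For the upper bound, I would substitute $t=0$ into \eqref{cprdefeq}: since $r-r/p=r(1-1/p)=r/q$, the quotient there takes the value $2^{r/q}-1$ at $t=0$, so $C_{p,r}\leq 2^{r/q}-1$ at once. That this is strict (at least for $r<q$) follows from the computation carried out just before the corollary, where the $t$-derivative of the quotient in \eqref{cprdefeq} at $t=0$ was found to equal $r\bigl(2^{r/q}-2\bigr)$, which is negative; hence the quotient drops strictly below $2^{r/q}-1$ for small $t>0$, and $C_{p,r}<2^{r/q}-1$. (At the borderline $r=q$ one in fact has $C_{p,q}=1=2^{r/q}-1$, in agreement with \eqref{wkparaeq3}, so there the inequality is only an equality; the strict bound is a statement about $r<q$.)

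For the lower bound, abbreviate, for $t\in[0,1)$ and an exponent $s$,
\[
     F_s(t):=\frac{2^{s-s/p}(1+t^p)^{s/p}-(1+t)^s}{(1-t)^s},
\]
so that $C_{p,r}=\inf_{0\leq t<1}F_r(t)$, while $\inf_{0\leq t<1}F_2(t)=C_{p,2}=p-1$ by the Bynum--Drew formula recalled in Section~\ref{Crp}. The crux is the pointwise inequality
\[
     F_r(t)^{2/r}\geq F_2(t),\qquad 0\leq t<1 .
\]
Granting it, and using that $x\mapsto x^{2/r}$ is continuous and increasing on $[0,\infty)$, one obtains $C_{p,r}^{2/r}=\bigl(\inf_{t}F_r(t)\bigr)^{2/r}=\inf_{t}\bigl(F_r(t)\bigr)^{2/r}\geq\inf_{t}F_2(t)=p-1$, that is, $C_{p,r}\geq(p-1)^{r/2}$.

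To prove the pointwise inequality, set $A:=2^{1-1/p}(1+t^p)^{1/p}$ and $B:=1+t$. Taking $r$th roots in \eqref{lp2dineq} gives $A\geq B>0$, and the numerator of $F_s(t)$ is precisely $A^s-B^s\geq 0$, so $F_r(t)^{2/r}\geq F_2(t)$ is exactly the assertion $(A^r-B^r)^{2/r}\geq A^2-B^2$. Both sides are homogeneous of degree $2$ in $(A,B)$, so I may normalise $B=1$ and write $\sigma:=A/B\geq 1$, reducing the claim to $(\sigma^r-1)^{2/r}\geq\sigma^2-1$, equivalently $(\sigma^2-1)^{r/2}\leq\sigma^r-1$. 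This is immediate from the superadditivity of $x\mapsto x^{r/2}$ on $[0,\infty)$ (valid because $r/2\geq 1$): taking $a=\sigma^2-1$ and $b=1$, $(\sigma^2-1)^{r/2}+1=a^{r/2}+b^{r/2}\leq(a+b)^{r/2}=\sigma^r$.

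The step I expect to be the genuine obstacle is discovering and verifying the comparison $F_r^{2/r}\geq F_2$: it is what converts the intractable defining infimum for $C_{p,r}$ into the explicit $r=2$ value $p-1$, after which the homogeneity normalisation, the superadditivity estimate, and the interchange of $x\mapsto x^{2/r}$ with the infimum are all routine bookkeeping. One should also record the harmless points that the two infima run over the same interval $[0,1)$ and that $\inf_{0\leq t<1}F_2(t)=p-1$ genuinely holds throughout $1<p<2$, as recalled in Section~\ref{Crp}.
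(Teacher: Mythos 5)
Your argument is correct, and the lower bound follows a genuinely different route from the paper's. The paper disposes of $(p-1)^{r/2}\leq C_{p,r}$ by citing \cite[Thm.~2.1]{CR} (which gives that $L^p$ is $r$-LWP($(p-1)^{r/2}$)) together with the optimality of $C_{p,r}$ proved later in the paper; you instead prove the pointwise comparison $F_r(t)\geq F_2(t)^{r/2}$ on $[0,1)$ by writing both numerators as $A^s-B^s$ with $A=2^{1-1/p}(1+t^p)^{1/p}\geq B=1+t$ and using superadditivity of $x\mapsto x^{r/2}$, and then feed in the Bynum--Drew value $\inf_{0\leq t<1}F_2(t)=C_{p,2}=p-1$ recalled in Section \ref{Crp}. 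Your version is elementary and self-contained modulo Bynum--Drew, and in particular does not lean on the external reference or on the optimality theorem whose proof only comes afterwards; the paper's version is shorter but imports more. Your upper bound is exactly the paper's: evaluate the quotient at $t=0$ to get $2^{r/q}-1$, with strictness from the derivative value $r(2^{r/q}-2)<0$ computed before the corollary. Your parenthetical about the endpoint is also correct and is in effect a small correction to the statement as printed: when $r=q$ the derivative at $t=0$ vanishes ($2^{r/q}=2$), and indeed $C_{p,q}=1=2^{r/q}-1$ (the scalar case of Clarkson's second inequality shows $h(t)\geq 1$ with equality at $t=0$), so the strict inequality genuinely requires $r<q$, a point the paper's interior-minimum discussion glosses over.
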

\begin{proof}
  The first inequality comes from \cite[Thm.~2.1]{CR} (and the fact, as we will prove in a moment, that $C_{p, r}$ is the optimal weak parallelogram constant).  The second follows from comparing the infimum in (\ref{cprdefeq}) to the value at the endpoint $t=0$. 
\end{proof}

\section{Concrete examples}

Let demonstrate Theorem \ref{MAINT} with some specific examples. To find $C_{p, r}$, optimal constant, we see from Lemma \ref{defcpqlem} the need to minimize the function 
\begin{equation}\label{hoft}
h(t)  = \frac{2^{r-r/p}(1+t^p)^{r/p}-(1+t)^r}{(1-t)^r}, \quad 0 \leq t < 1.
\end{equation}

We first examine some values of $1 < p < 2$. 

\begin{Example}
Let $p = 3/2$ (for which $q = 3$) and $r = 5/2$ (note that $2 \leq r \leq q$). In this case 
$$h(t)= \frac{2^{5/6} \left(t^{3/2}+1\right)^{5/3}-(t+1)^{5/2}}{(1-t)^{5/2}}.$$ One can see from the graph in Figure \ref{One} that $h$ has a minimum  between $t = 0.02$ and $t = 0.03$. By Newton's method (finding the root of the zero of $h'(t)$),
the minimum occurs at $t \approx .027307$ and so 
$C_{3/2, 5/2} \approx 0.777545.$
The bounds from Corollary \ref{ppsdpfoidsf} give the estimate 
$0.420448 \leq C_{3/2, 5/2} \leq 0.922331.$
\begin{figure}
 \includegraphics[width=.6\textwidth]{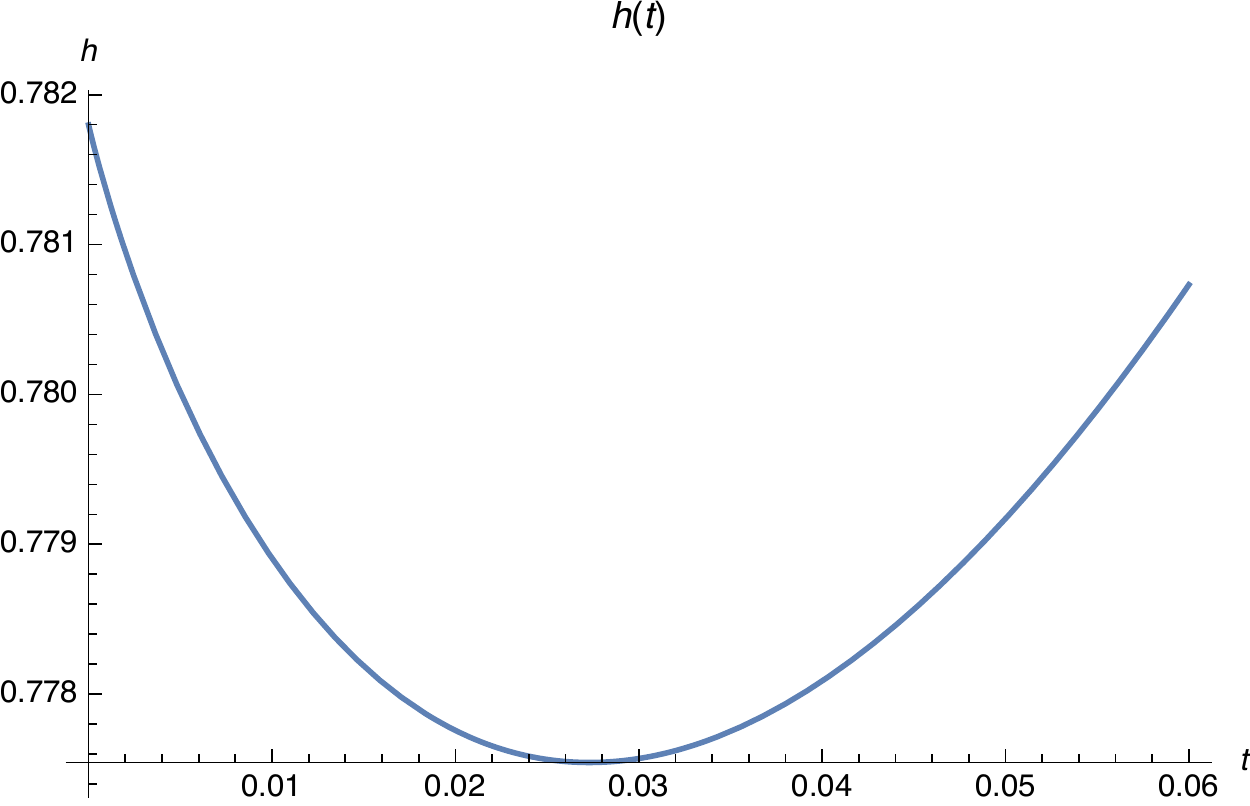}
 \caption{The graph of $h(t)$ from \eqref{hoft} with $p = 3/2$ and $r = 5/2$.}
 \label{One}
\end{figure}
\end{Example}

\begin{Example}
As in the previous example, let $p = 7/4$ (for which $q = 7/3$) and $r = 22/10$ (note $2 \leq r \leq q$). In this case 
$$h(t)= \frac{2^{33/35} \left(t^{7/4}+1\right)^{44/35}-(t+1)^{11/5}}{(1-t)^{11/5}}.$$ One can see from the graph in Figure \ref{Two} that $h$ has a minimum  between $t = 0.03$ and $t = 0.05$. Similarly, as done in the previous example, the minimum occurs at $t \approx 0.0402087$ and so 
$C_{7/4, 22/10} \approx 0.919875.$
The bounds from Corollary \ref{ppsdpfoidsf} yield the estimate 
$0.728731 \leq C_{7/4, 22/10} \leq 0.922331.$
\begin{figure}
 \includegraphics[width=.6\textwidth]{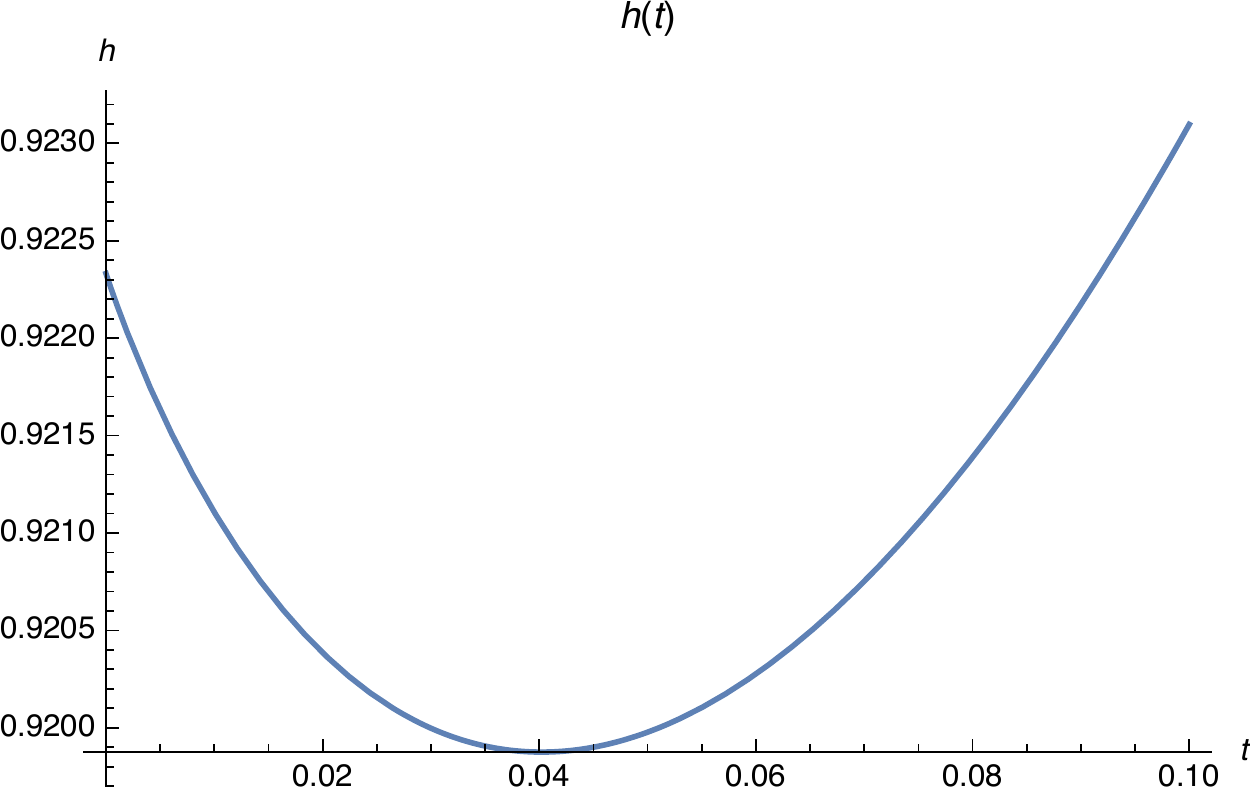}
 \caption{The graph of $h(t)$ from \eqref{hoft} with $p = 7/4$ and $r = 22/10$.}
 \label{Two}
\end{figure}
\end{Example}

Next we examine the optimal constant in Theorem \ref{MAINT} when $p > 2$. 

\begin{Example}
Let 
$p = 5/2$ (for which $q = 5/3$) and $r = 21/12$ (for which $r' = 7/3$). By Theorem \ref{MAINT}, the optimal constant is 
$C_{q, r'}^{-p/q} = C_{5/3, 7/3}^{-3/2}.$
To compute $C_{5/3, 7/3}$ we need to minimize 
$$h(t) = \frac{2^{14/15} \left(t^{5/3}+1\right)^{7/5}-(t+1)^{7/3}}{(1-t)^{7/3}}, \quad 0 \leq t < 1.$$
Using our previous examples (see Figure \ref{Three}) we get 
$C_{5/3, 7/3} \approx 0.908148$ and so 
$C_{5/3, 7/3}^{-3/2} \approx 1.15549.$
\begin{figure}
 \includegraphics[width=.6\textwidth]{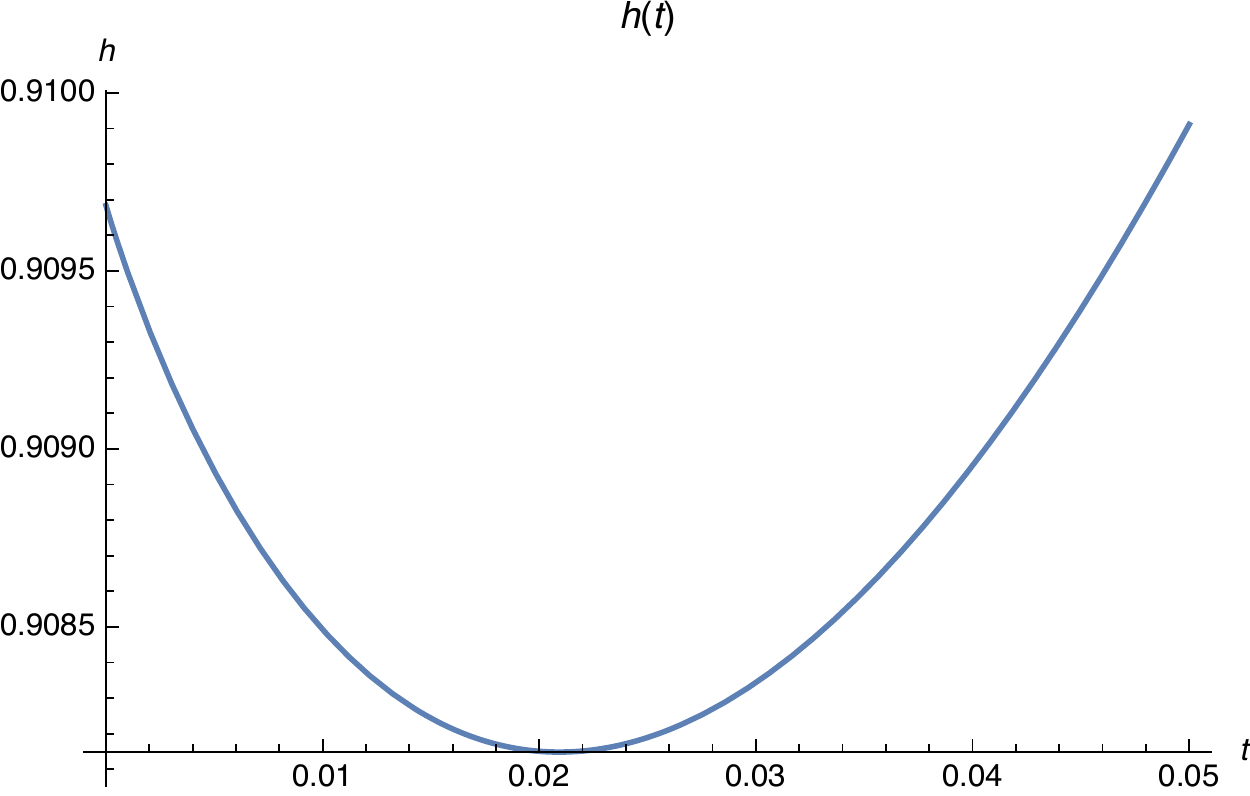}
 \caption{The graph of $h(t)$ from \eqref{hoft} with $p = 5/3$ and $r = 21/12$.}
 \label{Three}
\end{figure}
\end{Example}

\section{Weak Parallelogram Laws for $L^p$}

This section contains the proof of Theorem \ref{MAINT}.  We will focus on the parameter values $1<p\leq 2$ since, for $2 \leq p <\infty$, we can use the following duality property for weak parallelogram spaces \cite{CH1}.

\begin{Proposition}
\label{dualitythm}
Let $1 < p < \infty$,  $q$ be its conjugate index, and $C > 0$. If $\mathscr{X}$ is a Banach space and $\mathscr{X}^{*}$ its dual, we have the following:
\begin{enumerate}
\item[(i)] $\mathscr{X}$ is $p$-LWP($C$) if and only if $\mathscr{X}^*$ is $q$-UWP($C^{-q/p}$); 
\item[(ii)] $\mathscr{X}$ is $p$-UWP($C$) if and only if $\mathscr{X}^*$ is $q$-LWP($C^{-q/p}$).
\end{enumerate}
\end{Proposition}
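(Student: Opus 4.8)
The plan is to prove the two forward implications ("a law on $\mathscr{X}$ forces the dual law on $\mathscr{X}^*$'') by a direct estimate, and then to obtain the two reverse implications from these by passing to the bidual. Two preliminary remarks are used throughout. First, for $F\in\mathscr{X}^*$ and $\varepsilon>0$ there is $\mathbf{a}\in\mathscr{X}$ with $\|\mathbf{a}\|\leq 1$ and $\operatorname{Re}\langle\mathbf{a},F\rangle\geq\|F\|-\varepsilon$, straight from the definition of the dual norm, so "near-norming'' vectors cost nothing. Second, the substitution $\mathbf{u}=\mathbf{x}+\mathbf{y}$, $\mathbf{v}=\mathbf{x}-\mathbf{y}$ turns $p$-UWP($C$) into the equivalent \emph{diagonal form} $\|\mathbf{u}+\mathbf{v}\|^p+\|\mathbf{u}-\mathbf{v}\|^p\leq 2\|\mathbf{u}\|^p+2C\|\mathbf{v}\|^p$ (and likewise for the other three laws, with the inequality and/or the constant adjusted); I will feed the hypothesis in whichever of its two forms is convenient. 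I will also use the elementary identity $\sup_{s,t\geq 0}(s\alpha+t\beta)/(s^p+t^p)^{1/p}=(\alpha^q+\beta^q)^{1/q}$, valid for $\alpha,\beta\geq 0$.

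For the forward half of (i), assume $\mathscr{X}$ is $p$-LWP($C$), fix $F,G\in\mathscr{X}^*$ and $\varepsilon>0$, and choose $\mathbf{a},\mathbf{b}\in\mathscr{X}$ of norm $\leq 1$ with $\operatorname{Re}\langle\mathbf{a},F+G\rangle\geq\|F+G\|-\varepsilon$ and $\operatorname{Re}\langle\mathbf{b},F-G\rangle\geq\|F-G\|-\varepsilon$. For $s,t\geq 0$, applying the hypothesis to $\mathbf{x}=s\mathbf{a}$, $\mathbf{y}=t\mathbf{b}$ gives, with $A:=\|s\mathbf{a}+t\mathbf{b}\|$ and $B:=\|s\mathbf{a}-t\mathbf{b}\|$,
\[
A^p+C B^p\ \leq\ 2^{p-1}\bigl(s^p\|\mathbf{a}\|^p+t^p\|\mathbf{b}\|^p\bigr)\ \leq\ 2^{p-1}(s^p+t^p),
\]
while pairing $s\mathbf{a}+t\mathbf{b}$ against $F$ and $s\mathbf{a}-t\mathbf{b}$ against $G$, taking real parts, and combining the two estimates gives
\[
s\bigl(\|F+G\|-\varepsilon\bigr)+t\bigl(\|F-G\|-\varepsilon\bigr)\ \leq\ A\|F\|+B\|G\|\ =\ A\|F\|+\bigl(C^{1/p}B\bigr)\bigl(C^{-1/p}\|G\|\bigr).
\]
Now apply H\"older with exponents $p,q$ to the last product, insert the bound on $A^p+C B^p$, use $2^{(p-1)/p}=2^{1/q}$, and let $\varepsilon\downarrow 0$ to obtain
\[
s\|F+G\|+t\|F-G\|\ \leq\ 2^{1/q}(s^p+t^p)^{1/p}\bigl(\|F\|^q+C^{-q/p}\|G\|^q\bigr)^{1/q}\qquad(s,t\geq 0).
\]
Dividing by $(s^p+t^p)^{1/p}$, taking the supremum over $s,t$ — which turns the left side into $(\|F+G\|^q+\|F-G\|^q)^{1/q}$ — and raising to the power $q$ yields $\|F+G\|^q+\|F-G\|^q\leq 2\|F\|^q+2C^{-q/p}\|G\|^q$, i.e.\ $\mathscr{X}^*$ is $q$-UWP($C^{-q/p}$).

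The forward half of (ii) is parallel, with every inequality reversed. Starting from the diagonal form of $p$-UWP($C$) applied to $\mathbf{u}=s\mathbf{u}_0$, $\mathbf{v}=t\mathbf{v}_0$, where $\mathbf{u}_0,\mathbf{v}_0$ of norm $\leq 1$ nearly norm $F$ and $G$, one gets $\|s\mathbf{u}_0+t\mathbf{v}_0\|^p+\|s\mathbf{u}_0-t\mathbf{v}_0\|^p\leq 2s^p+2C t^p$; pairing $s\mathbf{u}_0\pm t\mathbf{v}_0$ against $F\pm G$, applying H\"older, and optimizing over $s,t$ (this time the conversion $C\mapsto C^{-q/p}$ is effected by the rescaling $t\mapsto C^{1/p}t$ in the last step) produces $\|F+G\|^q+\|F-G\|^q\geq 2\|F\|^q+2C^{-q/p}\|G\|^q$, i.e.\ $\mathscr{X}^*$ is $q$-LWP($C^{-q/p}$). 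For the converses: if $\mathscr{X}^*$ is $q$-UWP($C^{-q/p}$), then the forward half of (ii) applied to $\mathscr{X}^*$, with $p$ and $q$ interchanged, shows $\mathscr{X}^{**}$ is $p$-LWP($(C^{-q/p})^{-p/q}$), which equals $p$-LWP($C$); since $\mathscr{X}$ sits isometrically in $\mathscr{X}^{**}$ and any weak parallelogram law passes at once to subspaces, $\mathscr{X}$ is $p$-LWP($C$). Symmetrically, the "if'' half of (ii) follows from the forward half of (i).

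I anticipate no essential obstacle: the engine is H\"older applied twice around the hypothesis, a one-variable optimization, and the standard bidual embedding. The one step carrying content is the splitting $A\|F\|+(C^{1/p}B)(C^{-1/p}\|G\|)$ (respectively the rescaling $t\mapsto C^{1/p}t$ in part (ii)), which is exactly what converts the weight $C$ coming out of the hypothesis into the weight $C^{-q/p}$ required by the conclusion. The place needing genuine care is the bookkeeping of constants — the powers of $2$ (note $2^{1/q}\cdot 2^{1/p}=2$, so a $2^{p-1}$ on one side corresponds to a $2^{q-1}$ on the other), the exponent $-q/p$, and checking that each of $A^p+C B^p$ and $A^p+B^p$ is estimated on the side of the inequality that the argument actually needs.
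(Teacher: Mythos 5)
Your proof is correct. Note, though, that the paper itself does not prove Proposition \ref{dualitythm}: it simply quotes the duality theorem from \cite{CH1}, so there is no in-paper argument to compare against. Your write-up therefore supplies a self-contained proof of a cited result, and the argument holds up under checking: the ``diagonal forms'' are the right reformulations (with $\mathbf{u}=\mathbf{x}+\mathbf{y}$, $\mathbf{v}=\mathbf{x}-\mathbf{y}$, $r$-UWP($D$) becomes $\|\mathbf{u}+\mathbf{v}\|^r+\|\mathbf{u}-\mathbf{v}\|^r\leq 2\|\mathbf{u}\|^r+2D\|\mathbf{v}\|^r$ and $r$-LWP the reverse); the pairing $\langle s\mathbf{a}+t\mathbf{b},F\rangle+\langle s\mathbf{a}-t\mathbf{b},G\rangle=s\langle\mathbf{a},F+G\rangle+t\langle\mathbf{b},F-G\rangle$ together with the weighted H\"older split $A\|F\|+(C^{1/p}B)(C^{-1/p}\|G\|)$ does convert the constant $C$ into $C^{-q/p}$; the exponent bookkeeping $2^{(p-1)/p}=2^{1/q}$, $(2^{1/q})^q=2$ lands exactly on the diagonal form of the dual law; the two-variable supremum identity is the standard $\ell^p$--$\ell^q$ duality in $\mathbb{R}^2$; and deducing the converses by applying the forward implications to $\mathscr{X}^*$ and restricting from $\mathscr{X}^{**}$ to $\mathscr{X}$ is legitimate, since the laws are universally quantified inequalities and hence pass to closed subspaces under the canonical isometric embedding (consistent with the paper's remark that weak parallelogram laws are inherited by subspaces). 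The only cosmetic caveat is that you use near-norming vectors in $\mathscr{X}$ for functionals in $\mathscr{X}^*$, which is immediate from the definition of the dual norm and needs no Hahn--Banach; your $\varepsilon$-limit handles this cleanly. In short: correct, complete, and necessarily a different route from the paper only in the sense that the paper offers no proof at all.
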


%
%
%
Proceeding to the proof of Theorem \ref{MAINT}, the weak parallelogram constants appearing in \eqref{wkparaeq1} and \eqref{wkparaeq3} are unity, and hence must already be optimal \cite[Proposition 2.2]{CH1}.
For the case in (\ref{wkparaeq2}), the derivation of the optimal constant will proceed below in stages.  The remaining three cases are the corresponding dual statements, in the sense that ``lower'' and ``upper'' are interchanged, each exponent is switched with its H\"{o}lder conjugate, and the weak parallelogram constants do just the right thing via Proposition \ref{dualitythm}.  Here is the last bit of heavy lifting needed to prove (\ref{wkparaeq2}).

%
%
%

%
\begin{Lemma}\label{hposlem}
  Let $1 < p \leq 2 \leq r \leq q$, where $1/p+1/q =1$.  For any real numbers $u$ and $v$, we have 
  $
       |u+v|^r + C_{p,r} |u-v|^r \leq 2^{r-r/p}(|u|^p+|v|^p)^{r/p}.
  $
\end{Lemma}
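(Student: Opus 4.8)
The inequality is homogeneous of degree $r$ in $(u,v)$, so it suffices to treat the case $|u| + |v| = 1$ (or any convenient normalization). Moreover, replacing $(u,v)$ by $(-u,-v)$ leaves both sides unchanged, and swapping $u \leftrightarrow v$ swaps $|u+v|$ with itself and $|u-v|$ with itself, so I may assume $u \geq v$ and indeed $u \geq |v|$, i.e. $u \geq 0$ and $-u \leq v \leq u$. If $u = 0$ then $v = 0$ and there is nothing to prove, so assume $u > 0$ and set $t = v/u \in [-1, 1]$. Dividing through by $u^r > 0$, the claim becomes
\begin{equation*}
  |1+t|^r + C_{p,r}\,|1-t|^r \leq 2^{r-r/p}(1 + |t|^p)^{r/p}, \qquad t \in [-1,1].
\end{equation*}
So the whole problem reduces to a one-variable inequality.

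\textbf{Reduction to $t \in [0,1)$.} First dispose of $t = 1$: the right side is $2^{r-r/p}\cdot 2^{r/p} = 2^r$ and the left side is $2^r + 0$, so equality holds. For $t \in [0, 1)$ the desired inequality is exactly $h(t) \geq C_{p,r}$ with $h$ as in \eqref{hoft}, which is immediate from the definition \eqref{cprdefeq} of $C_{p,r}$ as the infimum of $h$. The remaining range is $t \in [-1, 0)$; write $t = -s$ with $s \in (0, 1]$. Here $|1+t| = 1-s$, $|1-t| = 1+s$, and $|t|^p = s^p$, so the inequality to prove reads
\begin{equation*}
  (1-s)^r + C_{p,r}\,(1+s)^r \leq 2^{r-r/p}(1+s^p)^{r/p}, \qquad s \in (0, 1].
\end{equation*}
Since $C_{p,r} \leq 1$ by Lemma \ref{defcpqlem}, the left side is at most $(1-s)^r + (1+s)^r$, so it is enough to show
\begin{equation*}
  (1-s)^r + (1+s)^r \leq 2^{r-r/p}(1+s^p)^{r/p}, \qquad s \in [0,1].
\end{equation*}
This is the ``hard direction'' of Clarkson's inequality in the form already used in \eqref{lp2dineq}: applying the inequality $(a+b)^r \le 2^{r-r/p}(a^p + b^p)^{r/p}$ type bound, or more directly invoking that $L^p$ with $1 < p \le 2 \le r$ satisfies the relevant Clarkson inequality on the two-point space, gives the result. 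Concretely, I expect to quote the two-dimensional case of the Clarkson inequality $\|f+g\|_p^r + \|f-g\|_p^r \le 2^{r-1}(\|f\|_p^r + \|g\|_p^r)$ applied to $f = (1,0)$, $g = (0,s)$ in $\ell^p_2$ — but since that is the statement we are ultimately proving, it is cleaner to prove this $s$-inequality from scratch: it follows from convexity/Hölder as in the proof of \eqref{lp2dineq}, since $r \geq p$ and the map $x \mapsto x^{r/p}$ is convex.

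\textbf{Main obstacle.} The genuinely delicate part is \emph{not} the algebra above but making sure the $t \in [-1,0)$ branch is handled by an argument that does not secretly circularly invoke Theorem \ref{MAINT}. The safe route is: for $s \in [0,1]$ prove $(1-s)^r + (1+s)^r \le 2^{r-r/p}(1+s^p)^{r/p}$ directly. Set $\phi(s)$ equal to the ratio of right side to left side; one shows $\phi(0) = 2^{r-r/p} \ge 1$ wait — at $s=0$ both sides equal... let me instead argue: by Hölder as in \eqref{lp2dineq}, $(1+s)^r \le 2^{r-r/p}(1+s^p)^{r/p}$, and separately $(1-s)^r \le$ (something small); more robustly, use the elementary inequality that for $0 \le s \le 1$, $1 < p \le 2 \le r$,
\begin{equation*}
  (1-s)^r + (1+s)^r \le 2\,(1+s^2)^{r/2} \le 2\,(1+s^p)^{r/p} \cdot 2^{\,r/2 - r/p + \cdots}
\end{equation*}
— the chain $2(1+s^2)^{r/2} \le 2^{r-r/p}(1+s^p)^{r/p}$ needs $s^2 \le s^p$ (true since $s \le 1$, $p \le 2$) and $2 \le 2^{r-r/p}$ (true since $r(1-1/p) \ge r/2 \ge 1$). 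The inequality $(1-s)^r + (1+s)^r \le 2(1+s^2)^{r/2}$ is the classical fact that $a \mapsto ((1-s)^a + (1+s)^a)^{1/a}$ decreases, or follows from the power-mean / Clarkson inequality for the scalar case; I will prove it via the binomial series, noting all odd terms cancel and the even terms compare coefficient-wise, or cite it as the two-term instance of the standard inequality. Thus the only real work is assembling these elementary scalar estimates cleanly and in the right order.
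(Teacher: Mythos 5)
Your reduction by homogeneity and symmetry to the one-variable inequality on $[-1,1]$, and your treatment of $t\in[0,1]$, are fine and essentially match the paper (which uses the inversion symmetry $k(t)=|t|^rk(1/t)$ of $k(t):=2^{r-r/p}(1+|t|^p)^{r/p}-|1+t|^r-C_{p,r}|1-t|^r$ instead of normalizing $|u|\ge|v|$, but that difference is immaterial). The genuine gap is in the branch $t\in[-1,0)$. There you bound $C_{p,r}(1+s)^r\le(1+s)^r$ and reduce to the claim $(1-s)^r+(1+s)^r\le 2^{r-r/p}(1+s^p)^{r/p}$ for all $s\in[0,1]$. That intermediate inequality is \emph{false} whenever $r<q$: since $2^{r-r/p}=2^{r/q}$, at $s=0$ it reads $2\le 2^{r/q}$, which holds only when $r\ge q$ (it is the scalar Clarkson inequality exactly at $r=q$). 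The original statement survives at $s=0$ only because $C_{p,r}\le 2^{r/q}-1$, which is in general much smaller than $1$, so discarding the factor $C_{p,r}$ on the \emph{large} term $(1+s)^r$ is too lossy. Consistently, the auxiliary estimates you sketch for the false claim also fail: $(1-s)^r+(1+s)^r\le 2(1+s^2)^{r/2}$ is wrong at $s=1$ for $r>2$ (monotonicity of $a\mapsto\|(1+s,1-s)\|_a$ gives the constant $2^{r/2}$, not $2$), and the assertion ``$2\le 2^{r-r/p}$ since $r(1-1/p)\ge r/2\ge1$'' is backwards: $p\le2$ gives $1-1/p\le 1/2$, so $r-r/p=r/q\le1$ with equality only at $r=q$.

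The repair is to use $C_{p,r}\le1$ (Lemma \ref{defcpqlem}) to compare the negative branch with the positive branch rather than with the symmetric sum: for $s\in[0,1]$, since $(1+s)^r\ge(1-s)^r$, one has $(1-s)^r+C_{p,r}(1+s)^r\le(1+s)^r+C_{p,r}(1-s)^r$, because the difference of the two sides is $(1-C_{p,r})\bigl[(1+s)^r-(1-s)^r\bigr]\ge0$. Hence the case $t=-s<0$ follows from the case $t=s\in[0,1]$, which is exactly the definition \eqref{cprdefeq} of $C_{p,r}$ as the infimum of $h$ in \eqref{hoft} (with equality at $t=1$). This is precisely the paper's observation that $k(t)\ge k(|t|)$, after which $k\ge0$ on $[0,1]$ finishes the proof; with that one substitution your argument closes, and no Clarkson-type scalar inequality is needed at all.
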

\begin{proof}
For real values of $t$ define
   \begin{equation*}\label{hoftdefeq}
        k(t) := 2^{r-r/p} (1+|t|^p)^{r/p} - |1+t|^r - C_{p,r}|1-t|^r.
   \end{equation*}
We can readily confirm that 
$k(t) = |t|^r k(1/t), t \not = 0$, and $k(t) \geq k(|t|).$  Consequently, we will be done if we can show that $k(t) \geq 0$ for all $t \in [0,1]$.  But this follows from Lemma \ref{defcpqlem}.
\end{proof}

The next step in the proof of Theorem \ref{MAINT} requires Hanner's inequality \cite[Thm.~1]{MR0077087}:
  If $1 < p \leq 2$ and $F, G \in L^p$, then
\begin{equation}\label{HI}
  \big( \|F\|_p + \|G\|_p\big)^p + \big| \|F\|_p - \|G\|_p\big|^p
    \leq  \|F+G\|_p^p +  \|F-G\|_p^p.
    \end{equation}

The following establishes \eqref{wkparaeq2}. 

%
%
 %

\begin{Lemma}
   If $1 < p \leq 2 \leq r \leq q$, where $1/p + 1/q =1$, then $L^p$ is
$r$-LWP($C_{p,r}$). Moreover, the constant $C_{p, r}$ is optimal. 
\end{Lemma}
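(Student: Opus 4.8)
The plan is to establish the inequality
\[
  \|f+g\|_p^r + C_{p,r}\|f-g\|_p^r \leq 2^{r-1}\big(\|f\|_p^r + \|g\|_p^r\big)
\]
by reducing it, through a sequence of scalar and functional inequalities, to the pointwise estimate of Lemma \ref{hposlem}, and then to confirm optimality by exhibiting near-extremal pairs drawn from the defining infimum \eqref{cprdefeq}.

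\emph{The inequality.} First I would apply Lemma \ref{hposlem} with $u = \|F\|_p$ and $v = \|G\|_p$ for suitable $F,G$; this gives, for any nonnegative reals,
\[
  \big(\|F\|_p + \|G\|_p\big)^r + C_{p,r}\big|\|F\|_p - \|G\|_p\big|^r \leq 2^{r-r/p}\big(\|F\|_p^p + \|G\|_p^p\big)^{r/p}.
\]
Next I would invoke Hanner's inequality \eqref{HI}, which controls $\big(\|F\|_p+\|G\|_p\big)^p + \big|\|F\|_p-\|G\|_p\big|^p$ from above by $\|F+G\|_p^p + \|F-G\|_p^p$. Since $r/p \geq 1$, the map $s \mapsto s^{r/p}$ is convex and increasing, so I can pass from the $p$-th powers in Hanner to the $r$-th powers appearing in Lemma \ref{hposlem}: writing $a = \|F\|_p+\|G\|_p$ and $b = \big|\|F\|_p-\|G\|_p\big|$, I need $a^r + C_{p,r}b^r \leq 2^{r-r/p}(a^p+b^p)^{r/p}$, which is precisely Lemma \ref{hposlem} applied to the (nonnegative) choice $u = (a+b)/2 = \max(\|F\|_p,\|G\|_p)$, $v=(a-b)/2 = \min(\|F\|_p,\|G\|_p)$. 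Combining, $a^r + C_{p,r}b^r \leq 2^{r-r/p}\big(\|F+G\|_p^p + \|F-G\|_p^p\big)^{r/p}$, and then one more application of Clarkson's inequality for $1<p\leq 2$ — namely $\|F+G\|_p^p + \|F-G\|_p^p \leq 2^{p-1}(\|F\|_p^p+\|G\|_p^p)$ — followed by raising to the $(r/p)$-th power, yields $2^{r-r/p}\big(2^{p-1}(\|F\|_p^p+\|G\|_p^p)\big)^{r/p} = 2^{r-1}(\|F\|_p^p+\|G\|_p^p)^{r/p}$. Finally, substituting $F = \tfrac{1}{2}(f+g)$, $G = \tfrac{1}{2}(f-g)$ — so that $F+G = f$, $F-G = g$, $\|F\|_p+\|G\|_p = $ the quantity controlled above — and using $\|f\|_p^p + \|g\|_p^p \leq \dots$; here I must be careful about the direction: I would instead take $F = f$, $G = g$ directly, apply Hanner to $f,g$, then use $C_{p,r}$-estimate on $\|f\|_p,\|g\|_p$, and close with the Clarkson step on $\|f\|_p^p+\|g\|_p^p$ compared with $2^{p-1}(\|\frac{f+g}{2}\|_p^p + \|\frac{f-g}{2}\|_p^p)$ run in reverse. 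The bookkeeping of which Clarkson inequality goes in which direction is the one genuinely delicate point, and I would pin it down by tracking the equality case (scalar functions, $f = \alpha \mathbf{1}$, $g = \beta\mathbf{1}$) throughout.

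\emph{Optimality.} To see $C_{p,r}$ cannot be enlarged, I would test on scalars: take $X$ a two-point space (or simply $\mathbb{R}$ twice), and set $f = (1,t)$, $g = (1,-t)$ for $t\in[0,1)$, so $f+g = (2,0)$, $f-g = (0,2t)$, $\|f+g\|_p^r = 2^r$, $\|f-g\|_p^r = (2t)^r$, and $\|f\|_p^r = \|g\|_p^r = (1+t^p)^{r/p}$. Then $r$-LWP($C$) forces
\[
  2^r + C(2t)^r \leq 2^{r-1}\cdot 2(1+t^p)^{r/p} = 2^r(1+t^p)^{r/p},
\]
i.e. $C \leq \dfrac{2^r(1+t^p)^{r/p} - 2^r}{(2t)^r}$ for every $t\in(0,1)$; but the defining infimum uses the rescaled pair $f=(1,1)$-type vectors giving the expression $\frac{2^{r-r/p}(1+t^p)^{r/p}-(1+t)^r}{(1-t)^r}$, so I would instead use the pair $f = (1+t, 1-t)$-type or, cleanly, $f$ and $g$ with $f+g$ and $f-g$ proportional to $(1,1)$ and $(1,\tau)$; concretely set $f+g = (1,1)$ and $f-g = (\lambda, \lambda \tau)$ scaled appropriately, equivalently just take $f = \big((1+s)/2,(1+t)/2\big)$... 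The cleanest route: note $h(t)$ in \eqref{hoft} is exactly the ratio $\dfrac{2^{r-1}(\|f\|_p^r+\|g\|_p^r) }{\|f-g\|_p^r}$ minus $\dfrac{\|f+g\|_p^r}{\|f-g\|_p^r}$ for the pair $f = (1,t)$, $g=(t,1)$ (then $f+g = (1+t)(1,1)$, $f-g=(1-t)(1,-1)$, $\|f\|_p^r=\|g\|_p^r=(1+t^p)^{r/p}$), which reproduces \eqref{cprdefeq} term by term. Hence for each $t$, the constant in $r$-LWP is at most $h(t)$, so at most $\inf_{0\le t<1} h(t) = C_{p,r}$. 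This shows the constant produced in the inequality half is not improvable.

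\emph{Main obstacle.} The substantive work is the inequality direction, and within it the only real hazard is orienting the two invocations of Clarkson's inequality and Hanner's inequality consistently — all three are sharp, so a sign or direction error is invisible in order of magnitude but fatal. I would guard against this by verifying at each stage that the chain of inequalities is tight precisely on constant (or two-atom scalar) functions, which is exactly the configuration realizing $C_{p,r}$; this simultaneously double-checks the inequality and hands back the optimality statement. The scalar lemmas (Lemma \ref{hposlem}, and the convexity of $s\mapsto s^{r/p}$) are routine once set up, and Lemma \ref{defcpqlem} already guarantees $C_{p,r}>0$ so the statement is non-vacuous.
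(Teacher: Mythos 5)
Your optimality half is fine and is essentially the paper's own argument: the test pair $f=(1,t)$, $g=(t,1)$ in a two\mbox{-}dimensional $\ell^p$ slice makes the ratio in the LWP inequality exactly the function $h(t)$ of \eqref{hoft}, so no constant larger than $\inf_t h(t)=C_{p,r}$ can work (for a general $L^p$ one should add the remark that two disjoint sets of positive measure give an isometric copy of this two\mbox{-}dimensional $\ell^p$, as the paper does). The earlier false starts in that paragraph ($f=(1,t)$, $g=(1,-t)$, etc.) are harmless since you end at the right pair.

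The inequality half, however, has a genuine gap, and it is exactly the ``bookkeeping'' you flag and never resolve. Two concrete failures: (a) you apply Lemma \ref{hposlem} with $u=\|F\|_p$, $v=\|G\|_p$, so after substituting $F=\tfrac12(f+g)$, $G=\tfrac12(f-g)$ the left side of your chain is $\big(\tfrac{\|f+g\|_p+\|f-g\|_p}{2}\big)^r + C_{p,r}\big(\tfrac{|\|f+g\|_p-\|f-g\|_p|}{2}\big)^r$, which is not the quantity $\|f+g\|_p^r + C_{p,r}\|f-g\|_p^r$ you must bound; the roles of $(u,v)$ and $(\|F\|_p,\|G\|_p)$ have to be interchanged. (b) The ``one more application of Clarkson's inequality for $1<p\leq 2$'' in the form $\|F+G\|_p^p+\|F-G\|_p^p\leq 2^{p-1}(\|F\|_p^p+\|G\|_p^p)$ is the $p\geq 2$ Clarkson inequality; for $1<p\leq 2$ the inequality runs the other way, so that link is false, and even formally the constants do not close, since $2^{r-r/p}\cdot\big(2^{p-1}\big)^{r/p}=2^{2(r-r/p)}\neq 2^{r-1}$ in general; moreover you never pass from $(\|f\|_p^p+\|g\|_p^p)^{r/p}$ to $\|f\|_p^r+\|g\|_p^r$. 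The correct arrangement (the paper's) needs no Clarkson inequality at all: set $2u=\|f+g\|_p+\|f-g\|_p$ and $2v=\|f+g\|_p-\|f-g\|_p$, so that Lemma \ref{hposlem} applied to this $(u,v)$ gives $\|f+g\|_p^r+C_{p,r}\|f-g\|_p^r\leq 2^{r-r/p}(|u|^p+|v|^p)^{r/p}$; then Hanner \eqref{HI} with $F=\tfrac12(f+g)$, $G=\tfrac12(f-g)$ has left side exactly $|u|^p+|v|^p$ and right side $\|f\|_p^p+\|g\|_p^p$; finally H\"older (power means, using $r\geq p$) gives $(\|f\|_p^p+\|g\|_p^p)^{r/p}\leq 2^{r/p-1}(\|f\|_p^r+\|g\|_p^r)$, and $2^{r-r/p}\cdot 2^{r/p-1}=2^{r-1}$. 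As written, your proof of the inequality direction is incomplete.
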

\begin{proof}
    For $f, g \in L^p$ let 
    $$2u := \| f + g \|_p
   + \| f - g \|_p, \quad 2v := \|f + g \|_p
   - \|  f - g \|_p.$$ By Lemma \ref{hposlem}, Hanner's Inequality (with $F := \tfrac{1}{2} (f + g)$ and $G := \tfrac{1}{2} (f - g)$ in \eqref{HI}) and H\"{o}lder's Inequality,
\begin{align*}
   & \| f + g \|_p^r + C_{p,r}\| f - g \|_p^r \\
   &= (u+v)^r + C_{p,r} (u-v)^r \\
   &\leq 2^{r-r/p}(|u|^p+|v|^p)^{r/p} \\
        &= 2^{r-r/p}\Big[\big(  \textstyle{\frac{1}{2}}\| f + g \|_p+  \textstyle{\frac{1}{2}}\| f - g \|_p\big)^p
             +\big| \textstyle{\frac{1}{2}}\| f + g \|_p- \textstyle{\frac{1}{2}} \| f - g \|_p\big|^p \Big]^{r/p}\\
        &\leq 2^{r-r/p}\big(\|f\|_p^p+\|g\|_p^p\big)^{r/p}\\
        &\leq 2^{r-r/p}\big(\|f\|_p^{p(r/p)}+\|g\|_p^{p(r/p)}\big)^{(r/p)(p/r)}\big(1^{r/(r-p)}+ 1^{r/(r-p)}  \big)^{(r/p)([r-p]/r)}\\
        &= 2^{r-r/p}\big(\|f\|_p^{r}+\|g\|_p^{r}\big) 2^{r/p -1}\\
        &= 2^{r-1}\big( \|f\|_p^{r}+\|g\|_p^{r}\big).
\end{align*}
This verifies that $C_{p,r}$ is sufficiently small so that $L^p$ satisfies $r$-LWP($C_{p,r}$).  

To see that $C_{p, r}$ is optimal, let us first consider the special case that the space is $\ell^p$, with the natural basic sequence
$\mathbf{e}_0 := (1, 0, 0,\ldots)$, $\mathbf{e}_1 := (0, 1, 0, 0,\ldots)$, and so on.
Define $\mathbf{a} = t\mathbf{e}_0 + \mathbf{e}_1$ and $\mathbf{b} = \mathbf{e}_0 + t\mathbf{e}_1$ for some $t \in [0, 1)$.  Then
 \begin{align*}
       \frac{2^{r-1}\big(\|\mathbf{a}\|_p^r +\|\mathbf{b}\|_p^r\big) - \|\mathbf{a}+\mathbf{b}\|_p^r}{\|\mathbf{a}+\mathbf{b}\|_p^r}
           &= \frac{2^{r-1}2(1+t^p)^{r/p} - 2^{r/p}(1+t)^r}{2^{r/p}(1-t)^r}\\
           &= \frac{2^{r-r/p}(1+t^p)^{r/p} - (1+t)^r}{(1-t)^r}.
 \end{align*}
The infimum (for $t \in [0, 1)$) of the final expression is $C_{p,r}$.  Thus there is a sequence of values of $t$ for which equality to the infimum holds in the limit. Consequently,
the lower weak parallelogram law
\[
      \|  \mathbf{a} + \mathbf{b} \|_p^r + C\|  \mathbf{a} - \mathbf{b} \|_p^r \leq 2^{r-1}\big( \|\mathbf{a}\|_p^{r}+\|\mathbf{b}\|_p^{r}\big)
\]
cannot be valid for any value of $C$ greater than $C_{p,r}$.  This treats the special case of $L^p$ being the sequence space $\ell^p$.

Finally, if the general $L^p$ space has at least two disjoint measurable sets of non-zero measure, then there is an obvious  linear isomorphism between the span of $\{\mathbf{e}_0, \mathbf{e}_1\}$ in $\ell^p$, and the subspace of $L^p$ generated by the two measurable sets.  Then the calculation of the previous paragraph once again establishes that $C_{p,r}$ is optimal.
\end{proof}



%
%
%

\bibliographystyle{plain}

\bibliography{references3}

\end{document}